\pgfplotsset{compat=1.18}
\newtheorem{theorem}{Theorem}[section]
\newtheorem*{theorem*}{Theorem}
\newtheorem{lemma}[theorem]{Lemma}
\newtheorem{proposition}[theorem]{Proposition}
\theoremstyle{definition}
\newtheorem{example}[theorem]{Example}
\newtheorem{definition}[theorem]{Definition}
\newtheorem{remark}[theorem]{Remark}
\newcommand{\seg}[4]{
  \def\argarrow{#2}
  \def\rightarrowtoken{>}
  \def\leftarrowtoken{<}
  \ifx\argarrow\rightarrowtoken
    \pi_{#1 \rightarrow #3}
  \else
    \ifx\argarrow\leftarrowtoken
      \pi_{#1 \leftarrow #3}
    \else
      \errmessage{Invalid arrow direction '#2'. Use '<' or '>'.}
    \fi
  \fi
}
\newcommand{\sseg}[5]{
  \def\argarrow{#2}
  \def\rightarrowtoken{>}
  \def\leftarrowtoken{<}
  \ifx\argarrow\rightarrowtoken
    {#5}_{#1 \rightarrow #3}
  \else
    \ifx\argarrow\leftarrowtoken
      {#5}_{#1 \leftarrow #3}
    \else
      \errmessage{Invalid arrow direction '#2'. Use '<' or '>'.}
    \fi
  \fi
}
\DeclareMathOperator{\rev}{rev}
\title{The Dynamics and Orbit Structure of the Topdrop Map}
\author{Nathan R. Krause}
\begin{document}

\begin{abstract}
We study the topdrop map, a mapping on permutations in \(S_n\) related to card shuffling. We show this map is bijective and study its orbit structure. We introduce the notion of the topdrop-necklace as a way of classifying the orbits of the map and establish a general theorem to count orbits using topdrop-necklaces. We then provide exact counts for orbits of size two through five and lower bounds for the number of orbits of sizes six and eight. We show symmetries in orbits which happen when \(n\) or \(n-1\) is in the topdrop-necklace, count these orbits, and show that they have even size.  We prove a restriction on topdrop-necklaces based on permutation parity.
\end{abstract}

\maketitle

 \section{Introduction}
In 1976, David Berman and Murray S. Klamkin \cite{BermanKlamkinKnuth1976} described a ``reverse card shuffle" process in which one looks at the top card in a deck of \(n\) cards (numbered \(1\) thru \(n\)), takes that many cards off the top, reverses them, and puts them back on the top. According to Donald Knuth in an editorial note \cite{BermanKlamkinKnuth1977}, this operation was in fact first proposed by John Conway with the name \textit{topswaps}  (more commonly spelled  \textit{topswops}). In the language of permutations, the topswop map is applied to a permutation by taking the segment from the front of the permutation which is as large as the first element, reversing the segment, and placing it at the {front}. For instance, the topswop map applied to the permutation $435126$ results in the permutation $153426$.
Martin Gardner included topswops in his book ``Time Travel and other Mathematical Bewilderments" published in 1988 \cite{Gardner}. 
He also describes several variants, which are also attributed to Conway. One of these variants is \emph{topdrops}, which is the main focus of this paper. 

In topdrops, instead of placing the reversed segment at the front of the permutation, one places it at the end. We denote the topdrop map by \(T\). For example, the topdrop map applied to $435126$ results in $T(435126)=261534$;  the first element is $4$, so the first four elements are reversed and placed at the end. This results in behavior that is very different from topswops. In particular, while 
repeated application of the topswop map always terminates with a 1 on top of the deck (proved by Herbert Wilf, as recounted in \cite{Gardner}), we show that the topdrop map is, in fact, bijective. Thus, a main aim of this paper is to study the \emph{orbits} of the topdrop map. 

We begin in Section \ref{sec:Fundamental Properties of the Topdrop Map}, where we provide useful notation for working with the topdrop map, prove that the map is bijective by showing its inverse, and define the orbit of a permutation. We also prove a useful lemma that applying the inverse of the topdrop map \(k\) times is the same as reversing the permutation, applying the topdrop map \(k\) times, and reversing the result. That is, 
\newtheorem*{lem:revinv}{Lemma \ref{lem:revinv}}
\begin{lem:revinv}
For any \(\pi\) in \(S_n\) and any integer \(k\), \[T^{-1}(\pi) = \rev(T(\rev(\pi))).\]
\end{lem:revinv}

In Section \ref{sec:Topdrop-necklaces}, we introduce the topdrop-necklace, which is the sequence of first elements of permutations in an orbit. We show special properties and symmetries that occur when \(n\) or \(n-1\) appears in an orbit. These are useful for proving the later theorems about orbit counts.

\newtheorem*{thm:necklaces_with_n_or_n_minus_one_special_properties}{Theorem \ref{thm:necklaces_with_n_or_n_minus_one_special_properties}}
\begin{thm:necklaces_with_n_or_n_minus_one_special_properties}
The following is true when \(\pi\) is in \(S_n\) and \(\pi_1 = n\) or \(\pi_1 = n-1\), for \(n \geq 2\):
\begin{enumerate}
\item \(T^{-k}(\pi)=\rev(T^{k+1}(\pi))\) for any integer \(k\).

\item The orbit of \(\pi\) has even size. Equivalently, if a topdrop-valid necklace of \(S_n\) contains \(n\) and/or \(n-1\), then it has even size.

\item Let \(2m\) be the orbit size of \(\pi\). Then \(T^{m}(\pi)_1 = n\) or \(T^{m}(\pi)_1 = n-1\). Hence \(n\) and \(n-1\) collectively appear at least twice in the topdrop-necklace. In fact, they appear exactly twice.

\item There are exactly \((n-1)!\) orbits in \(S_n\) which have \(n\) and/or \(n-1\) in their topdrop-necklaces.
\end{enumerate}
\end{thm:necklaces_with_n_or_n_minus_one_special_properties}

\newtheorem*{thm:necklaces_with_n_or_n_minus_one_additional_symmetries}{Theorem \ref{thm:necklaces_with_n_or_n_minus_one_additional_symmetries}}
\begin{thm:necklaces_with_n_or_n_minus_one_additional_symmetries}
Assume \(\pi\) is in \(S_n\) and \(\pi_1 = n\) or \(\pi_1 = n-1\), for \(n \geq 2\). Relabel \(T^{k}(\pi)\) as \(\pi'\), \(T^{-k}(\pi)\) as \(\pi''\), and \(T^{-k+2}(\pi)\) as \(\pi'''\). Then the following is true:
\begin{enumerate}
\item \(\sseg{1}{>}{\pi'_1}{\pi'_1}{\pi'} = \sseg{1}{>}{\pi'_1}{\pi'_1}{\pi''}\) and \(\sseg{\pi'_1+1}{>}{n}{n-\pi'_1}{\pi'} = \sseg{\pi'_1+1}{<}{n}{n-\pi'_1}{\pi''}\). In other words, \(T^k(\pi)\) and \(T^{-k}(\pi)\) have the first \(T^{k}(\pi)_1\) elements in common, while the last \(n-T^{k}(\pi)_1\) elements are reversed.

\item \(\sseg{1}{>}{n-\pi'_n}{n-\pi'_n}{\pi'} = \sseg{1}{<}{n-\pi'_n}{n-\pi'_n}{\pi'''}\) and \(\sseg{n-\pi'_n+1}{>}{n}{\pi'_n}{\pi'} = \sseg{n-\pi'_n+1}{>}{n}{\pi'_n}{\pi'''}\). In other words, \(T^k(\pi)\) and \(T^{-k+2}(\pi)\) have the last \(T^{k+2}(\pi)_n\) elements in common, while the first \(n-T^{k+2}(\pi)_n\) elements are reversed.
\end{enumerate}
\end{thm:necklaces_with_n_or_n_minus_one_additional_symmetries}

In Section \ref{sec:Orbit Counting}, we show that orbits can be counted via topdrop-necklaces. First, we define the set of all valid topdrop-necklaces \(A_n\) and introduce the fundamental period \(P(N)\) of a topdrop-necklace. Then we state two theorems.

\newtheorem*{thm:orbits_with_necklace}{Theorem \ref{thm:orbits_with_necklace}}
\begin{thm:orbits_with_necklace}
Given a necklace \(\mathcal{N}\) in \(A_n\), the number of distinct orbits with necklace \(\mathcal{N}\) is \[\frac{\left(n-\left(\textnormal{\# of distinct values in }\mathcal{N}\right)\right)!}{P(\mathcal{N})}\]
\end{thm:orbits_with_necklace}

\newtheorem*{thm:orbit_counting}{Theorem \ref{thm:orbit_counting}}
\begin{thm:orbit_counting}
For every natural number \(n\) and natural number \(k\), the number of orbits of size \(k\) in \(S_n\) is \[\sum_{\{\mathcal{N} \in A_n \,:\, |\mathcal{N}|=k\}}{\frac{\left(n-\left(\textnormal{\# of distinct values in }\mathcal{N}\right)\right)!}{P(\mathcal{N})}}\]
\end{thm:orbit_counting}

We use these results to provide a theorem about orbit counts for small orbit sizes.

\newtheorem*{thm:orbit_counts}{Theorem \ref{thm:orbit_counts}}
\begin{thm:orbit_counts}
The following is true of the topdrop map in \(S_n\):
\begin{enumerate}
    \item There are \((n-2)!\) orbits of size two, when \(n \geq 2\)
    \item There are no orbits of size three
    \item There are \((n-2)!\) orbits of size four, when \(n \geq 3\)
    \item The number of orbits of size five, when \(n \geq 7\), is 
        \begin{itemize}
        \item \((n-6)(n-5)!\) orbits of size five if \(n\) is even
        \item \((n-5)(n-5)!\) orbits of size five if \(n\) is odd
        \end{itemize}
    \item The number of orbits of size six, when \(n \geq 4\), is at least        \begin{itemize}
        \item \((2n^2-11n+14)(n-4)!\) orbits of size six when \(n\) is even
        \item \((2n^2-12n+18)(n-4)!\) orbits of size six when \(n\) is odd
        \end{itemize}
    \item The number of orbits of size eight, when \(n \geq 5\), is at least        \begin{itemize}
        \item \(\left( \frac{3}{2}n^3 - \frac{35}{2}n^2 + 64n - 70 \right) (n - 5)!\) if \(n\) is congruent to \(0\) or \(2\) modulo \(6\)
        \item \(\left( \frac{3}{2}n^3 - \frac{35}{2}n^2+ \frac{137}{2}n - \frac{181}{2} \right) (n - 5)!\) if \(n\) is congruent to \(1\) modulo \(6\)
        \item \(\left( \frac{3}{2}n^3 - \frac{35}{2}n^2 + \frac{135}{2}n - \frac{171}{2} \right) (n-5)!\) if \(n\) is congruent to \(3\) or \(5\) modulo \(6\)
        \item \(\left( \frac{3}{2}n^3 - \frac{35}{2}n^2 + 66n - 80 \right) (n-5)!\) if \(n\) is congruent to \(4\) modulo \(6\)
        \end{itemize}
\end{enumerate}
\end{thm:orbit_counts}

In Section \ref{sec:Permutation Parity and the Topdrop-necklace}, we use permutation parity to make a statement about valid topdrop-necklaces.
\newtheorem*{thm:parity}{Theorem \ref{thm:parity}}
\begin{thm:parity}
Let \([x_1, x_2, \ldots, x_k]\) be a topdrop-valid necklace in \(S_n\). If \(n\) is even, then evenly many \(x_i\)'s are congruent to \(1\) or \(2\) modulo \(4\). If \(n\) is odd, then evenly many \(x_i\)'s are congruent to \(2\) or \(3\) modulo \(4\).
\end{thm:parity}

We end in Section \ref{sec:Future Directions} with some ideas for future work. In the appendix, we give more details on the history of the topswops map.

\section{Fundamental Properties of the Topdrop Map}\label{sec:Fundamental Properties of the Topdrop Map}

In this section, we define some notation, formally define the topdrop map, and prove Lemma \ref{lem:revinv} relating the inverse of the topdrop map and the reverse of the permutation.

We refer to elements of permutations by their indices using subscripts, i.e. if \(\pi\) is in \(S_n\), then \(\pi_i\) is the element in index \(i\) of \(\pi\). We write out permutations by listing their elements left to right, e.g. if \(\pi\) is in \(S_3\), then \(\pi = \pi_1 \pi_2 \pi_3\).

As shorthand, we write segments of permutations of the form \(\pi_a \pi_{a+1} \ldots \pi_{b}\) by \(\seg{a}{>}{b}{b+1-a}\). The subscript \(a \rightarrow b\) indicates that \(a\) is the lesser index and \(b\) is the greater index. We denote \(\pi_b \pi_{b-1} \ldots \pi_{a}\) by \(\seg{b}{<}{a}{b+1-a}\). The subscript \(b \leftarrow a\) indicates that \(a\) is the lesser index and \(b\) is the greater index.

We adopt the convention that if \(a > b\), then \(\seg{a}{>}{b}{0}\) is empty. Similarly \(\seg{b}{<}{a}{0}\) is empty. For example, \(\seg{1}{>}{3}{3}\seg{4}{>}{3}{0}\seg{3}{<}{4}{0}\seg{7}{<}{4}{4} = \pi_1 \pi_2 \pi_3 \pi_7 \pi_6 \pi_5 \pi_4\).

We are now ready to formally describe the topdrop map.

\begin{definition}
The topdrop map \(T: S_n \to S_n\) is defined by \[T\left(\seg{1}{>}{n}{n}\right) = \seg{\pi_1+1}{>}{n}{n-\pi_1}\seg{\pi_1}{<}{1}{\pi_1}\] 
\end{definition}

Note that if \(\pi_1 = n\) or \(\pi_1 = n-1\), then the topdrop map reverses the permutation.

\begin{example}
\(T(4612375) = 3752164\).
\(T(321) = 123\).
\(T(41253) = 35214\).
\end{example}

After applying the topdrop map, it is clear how to reverse the operation by looking at the bottom card, which leads to the next proposition.

\begin{proposition}
\(T\) is bijective.
\end{proposition}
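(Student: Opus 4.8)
The plan is to exhibit an explicit inverse for \(T\), following the hint that the bottom card of \(T(\pi)\) records how the map was applied. First I would read off the last entry of the image: since the definition appends the reversed block \(\seg{\pi_1}{<}{1}{\pi_1}\), whose final entry is \(\pi_1\), we have \(T(\pi)_n = \pi_1\) for every \(\pi\). Thus the bottom card of \(T(\pi)\) always equals the number of cards that were taken off the top, which is precisely the information needed to undo the operation.

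Guided by this, I would define a candidate inverse \(U \colon S_n \to S_n\) by setting \(k = \sigma_n\) and
\[
U(\sigma) = \sseg{n}{<}{n-k+1}{k}{\sigma}\,\sseg{1}{>}{n-k}{n-k}{\sigma};
\]
that is, \(U\) reverses the last \(k = \sigma_n\) entries of \(\sigma\) and moves them to the front, leaving the remaining first \(n-k\) entries behind. Because \(U\) merely rearranges the entries of \(\sigma\), its output is again a permutation in \(S_n\), and the boundary case \(\sigma_n = n\) is handled by the empty-segment convention, where the second block vanishes and \(U\) simply reverses \(\sigma\).

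The core of the argument is checking \(U \circ T = \mathrm{id}\). Fixing \(\pi\) and writing \(\sigma = T(\pi) = \seg{\pi_1+1}{>}{n}{n-\pi_1}\seg{\pi_1}{<}{1}{\pi_1}\), I would use \(\sigma_n = \pi_1 = k\) so that \(U\) selects the correct block length. The last \(k\) entries of \(\sigma\) are \(\seg{k}{<}{1}{k}\), and reversing them recovers \(\seg{1}{>}{k}{k}\); prepending these to the first \(n-k\) entries \(\seg{k+1}{>}{n}{n-k}\) reassembles \(\pi\) in order, so \(U(T(\pi)) = \pi\). Since \(S_n\) is finite, possession of a left inverse already forces \(T\) to be injective and hence bijective; for completeness I would also verify the symmetric identity \(T(U(\sigma)) = \sigma\), which runs the same computation in reverse using that \(U(\sigma)\) begins with \(\sigma_n\), confirming \(U = T^{-1}\).

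I anticipate no deep obstacle here: the conceptual step (the bottom card determines the reversed block) is immediate, so the real work is careful index bookkeeping through the two nested reversals, together with confirming that the degenerate cases \(\pi_1 \in \{n-1, n\}\)—where \(T\) collapses to a full reversal—are consistent with the general formula. The one place to stay vigilant is keeping the orientation of the reversed segment straight, namely that reversing \(\seg{k}{<}{1}{k}\) genuinely returns \(\seg{1}{>}{k}{k}\) rather than introducing an index shift.
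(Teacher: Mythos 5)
Your proposal is correct and takes essentially the same route as the paper: you define the identical inverse map \(U\) (reverse the last \(\sigma_n\) entries and move them to the front) and verify the composition, just as the paper does. The only cosmetic difference is that you invoke finiteness to upgrade the left inverse to a two-sided one, whereas the paper simply checks both compositions directly.
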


\begin{proof}
We define \(U: S_n \to S_n\) and show that it is the inverse of \(T\). Let
\[U(\seg{1}{>}{n}{n}) = \seg{n}{<}{n-\pi_n+1}{\pi_n}\seg{1}{>}{n-\pi_n}{n-\pi_n}\]

Then \[U(T(\seg{1}{>}{n}{n})) = U(\seg{\pi_1+1}{>}{n}{n-\pi_1}\seg{\pi_1}{<}{1}{\pi_1}) = \seg{1}{>}{\pi_1}{\pi_1}\seg{\pi_1+1}{>}{n}{n-\pi_1} = \seg{1}{>}{n}{n}\] 
\[T(U(\seg{1}{>}{n}{n})) = T(\seg{n}{<}{n-\pi_n+1}{\pi_n}\seg{1}{>}{n-\pi_n}{n-\pi_n}) = \seg{1}{>}{n-\pi_n}{n-\pi_n}\seg{n-\pi_n+1}{>}{n}{\pi_n} = \seg{1}{>}{n}{n}\]
\end{proof}

From now on we will refer to \(U\) as \(T^{-1}\). We denote by \(T^k\) the \(k\)-fold composition of \(T\), and by \(T^{-k}\) the \(k\)-fold composition of \(T^{-1}\). \(T^{0}\) is the identity map.

\begin{proposition}
For any \(\pi\) in \(S_n\) there is a natural number \(s\) so that \(T^{s}\left(\pi\right) = \pi\).
\end{proposition}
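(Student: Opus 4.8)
The plan is to exploit the finiteness of \(S_n\) together with the bijectivity of \(T\) established in the previous proposition. Since \(S_n\) contains exactly \(n!\) permutations, the forward iterates \(\pi, T(\pi), T^2(\pi), \ldots\) cannot all be distinct: among the first \(n!+1\) of them, the pigeonhole principle forces two to coincide. Hence there exist integers \(0 \le i < j \le n!\) with \(T^i(\pi) = T^j(\pi)\).

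Next I would cancel the leading iterates. Because \(T\) is a bijection, so is each power \(T^i\), and in particular \(T^i\) is injective with inverse \(T^{-i}\). Applying \(T^{-i}\) to both sides of \(T^i(\pi) = T^j(\pi)\) yields \(\pi = T^{j-i}(\pi)\). Setting \(s = j - i\), which is a positive integer since \(j > i\), gives \(T^s(\pi) = \pi\), as desired.

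There is essentially no hard step here; the core of the argument is a routine application of pigeonhole to a finite set. The one point that must not be glossed over is the cancellation \(T^i(\pi) = T^j(\pi) \implies \pi = T^{j-i}(\pi)\), which relies crucially on the injectivity of \(T\). Without it, a collision among the iterates would only establish that the forward orbit is eventually periodic (``\(\rho\)-shaped''), not that \(\pi\) itself returns to its starting value. Since the previous proposition supplies the explicit inverse \(T^{-1} = U\), this cancellation is fully justified, and the claim follows.
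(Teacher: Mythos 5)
Your argument is correct and is exactly the standard expansion of the paper's one-line proof, which likewise cites only the finiteness of \(S_n\) and the bijectivity of \(T\). The pigeonhole step and the cancellation via \(T^{-i}\) are precisely the details the paper leaves implicit, so the two proofs are essentially the same.
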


\begin{proof}
This follows from the finiteness of \(S_n\) and that \(T\) is bijective.
\end{proof}

Since we have guaranteed that we eventually arrive back to the original permutation, we are ready to define the orbit of a permutation.

\begin{definition}
The orbit \(\mathcal{O}{(\pi)}\) of a permutation is the sequence \[
(\pi, T^{1}(\pi), T^{2}(\pi), \ldots, T^{s-1}(\pi)),
\] where \(s\) is the smallest natural number so that \(T^{s}\left(\pi\right) = \pi\). We refer to \(s\) as the size of the orbit.
\end{definition}

We denote the reverse of a permutation \(\pi\) with \(\rev(\pi)\). For example, \(\rev(1324) = 4231\). The following lemma gives a useful symmetry which we will use multiple times. It says that applying the topdrop map a number of times is the same as reversing the permutation, applying the inverse the same number of times, and reversing again.

\begin{lemma}\label{lem:revinv}
For any \(\pi\) in \(S_n\) and any integer \(k\), \[T^{-1}(\pi) = \rev(T(\rev(\pi))).\]
\end{lemma}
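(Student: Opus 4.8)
The plan is to prove the identity by direct computation from the closed-form expressions for \(T\) and for its inverse \(T^{-1}=U\) established in the preceding proposition. The guiding intuition is that \(T\) reads the \emph{first} entry \(\pi_1\) and manipulates a prefix, while \(T^{-1}\) reads the \emph{last} entry \(\pi_n\) and manipulates a suffix; since \(\rev\) interchanges prefixes and suffixes and is an involution, conjugating \(T\) by \(\rev\) should turn the prefix operation into the matching suffix operation, which is exactly \(T^{-1}\). Making this precise is just a matter of tracking indices.

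First I would set \(\sigma = \rev(\pi)\), so that \(\sigma_i = \pi_{n+1-i}\) for every \(i\), and in particular \(\sigma_1 = \pi_n\). Next I would apply the definition of \(T\) to \(\sigma\): since its leading entry is \(\sigma_1 = \pi_n\), the map reverses the first \(\pi_n\) entries of \(\sigma\) and appends them to the end, giving
\[\sseg{\pi_n+1}{>}{n}{n-\pi_n}{\sigma}\,\sseg{\pi_n}{<}{1}{\pi_n}{\sigma}.\]
Then I would rewrite each entry back in terms of \(\pi\) via \(\sigma_i = \pi_{n+1-i}\): the surviving suffix \(\sigma_{\pi_n+1}\cdots\sigma_n\) becomes \(\pi_{n-\pi_n}\cdots\pi_1\), while the reversed prefix \(\sigma_{\pi_n}\cdots\sigma_1\) becomes \(\pi_{n-\pi_n+1}\cdots\pi_n\).

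Finally I would apply \(\rev\) to \(T(\sigma)\), that is, reverse the whole word, and read off
\[\sseg{n}{<}{n-\pi_n+1}{\pi_n}{\pi}\,\sseg{1}{>}{n-\pi_n}{n-\pi_n}{\pi},\]
which is precisely the closed form of \(U(\pi) = T^{-1}(\pi)\) from the proof of bijectivity; this establishes the displayed single-step identity. The general statement ``for any integer \(k\)'' then follows by iterating the base case, matching the informal description in the introduction: assuming \(T^{-k}(\pi) = \rev(T^{k}(\rev(\pi)))\) gives \(\rev(T^{-k}(\pi)) = T^{k}(\rev(\pi))\), and substituting into \(T^{-(k+1)}(\pi) = \rev(T(\rev(T^{-k}(\pi))))\) collapses to \(\rev(T^{k+1}(\rev(\pi)))\), with the cases \(k \le 0\) handled symmetrically.

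The only real obstacle is the index bookkeeping. The danger is off-by-one slips in relations such as \(n+1-\pi_n = n-\pi_n+1\) and in deciding where each block begins and ends after the two reversals. I expect the paper's segment notation to absorb most of this, turning the argument into a clean substitution rather than an entrywise chase. I would also check the degenerate boundaries: when \(\pi_n = n\) the whole word is simply reversed, and the convention that \(\seg{a}{>}{b}{0}\) is empty for \(a > b\) must be invoked so the formulas remain valid when one of the two blocks vanishes.
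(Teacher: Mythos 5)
Your proposal is correct and follows essentially the same route as the paper: both compute \(T(\rev(\pi))\) directly from the definition (the leading entry of \(\rev(\pi)\) is \(\pi_n\), so the first \(\pi_n\) entries are reversed and dropped to the end), reverse the result, and match it against the closed form of \(U = T^{-1}\) from the bijectivity proof. Your added induction extending the identity to \(T^{-k}(\pi) = \rev(T^{k}(\rev(\pi)))\) is a sensible bonus, since the paper states only the single-step identity yet later invokes the \(k\)-fold version.
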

\begin{proof}
By definition, \[T^{-1}(\pi) = \seg{n}{<}{n-\pi_n+1}{\pi_n} \seg{1}{>}{n-\pi_n}{n-\pi_n}.\] On the other hand, \[T(\rev(\pi)) = T(\seg{1}{>}{n}{n}) = \seg{n-\pi_n}{<}{1}{n-\pi_n} \seg{n-\pi_n+1}{>}{n}{\pi_n},\] so \[\rev(T(\rev(\pi))) = \seg{n}{<}{n-\pi_n+1}{\pi_n} \seg{1}{>}{n-\pi_n}{n-\pi_n} = T^{-1}(\pi).\]
\end{proof}

\begin{example}
Let \(\pi = 231\). \(T^{-1}(\pi) = 123\). \(\rev(\pi) = 132\), \(T(\rev(\pi)) = 321\), and \(\rev(T(\rev(\pi))) = 123 = T^{-1}(\pi)\).
\end{example}

\section{Topdrop-necklaces}\label{sec:Topdrop-necklaces}
It is very helpful when studying the dynamics of the topdrop map to track the sequence of first elements of permutations in an orbit. We encapsulate this data by defining the topdrop-necklace. We will also prove some special properties that occur when \(n\) or \(n-1\) is in the topdrop-necklace, which will be useful for the proofs in the next section.

\begin{definition}
The topdrop-necklace \(\mathcal{N}({\pi})\) of a permutation is the necklace formed by taking the first element of every permutation of \(\mathcal{O}{(\pi)}\) in order. The size of \(\mathcal{N}({\pi})\) is the same size as \(\mathcal{O}{(\pi)}\). We denote the sequence with brackets instead of parentheses to emphasize that the topdrop-necklace is not a permutation. When we wish to refer to a topdrop-necklace without referring to a particular permutation which generates it, we may just write \(\mathcal{N}\). Often, we just say necklace instead of topdrop-necklace, and these should be understood to both refer to the topdrop-necklace.
\end{definition}

\begin{example}
Let \(\pi = 14235\) in \(S_5\). Then \(\mathcal{O}(\pi)\) is \((14235,42351,\allowbreak15324,\allowbreak 53241)\). The size of \(\mathcal{O}(\pi)\) is \(4\). The topdrop-necklace \(\mathcal{N}(\pi)\) is \([1,4,1,5]\). The size of \(\mathcal{N}(\pi)\) is also \(4\).
\end{example}

\begin{definition}
A necklace is topdrop-valid in \(S_n\) if there is some permutation \(\pi\) in \(S_n\) having that sequence as its topdrop-necklace. We let \(A_n\) be the set of all topdrop-valid necklaces in \(S_n\).
\end{definition}

\begin{example}
The sequence \([1,4,1,5]\) is a topdrop-valid necklace in \(S_5\) per the previous example. The sequence is \([5,5]\) is not a topdrop-valid necklace in \(S_5\) because there is no permutation in \(S_5\) which can have \([5,5]\) as its topdrop-necklace.
\end{example}

We have to be careful when looking at topdrop-valid necklaces, because sometimes as we iterate the topdrop map the first elements start to repeat before we have reached the original permutation. However, it is still useful to track how long it takes for the first elements to repeat, as we will soon see.

\begin{definition}
For a necklace \(\mathcal{N}\), there is some smallest contiguous segment of the necklace which can be repeated  to form the necklace. The \textit{fundamental period} \(P(\mathcal{N}))\) of \(\mathcal{N}\) is the number of copies of that segment required to form the necklace.
\end{definition}

\begin{example}
The necklace \([3, 5, 9, 8, 6, 3, 5, 9, 8, 6, 3, 5, 9, 8, 6, 3, 5, 9, 8, 6]\) is topdrop-valid in \(S_{12}\). Its fundamental period is \(4\), because it is formed from \(4\) repetitions of \(3, 5, 9, 8, 6\).
\end{example}

When \(n\) or \(n-1\) appears in a topdrop-necklace, there are special properties and symmetries which arise.

\begin{theorem}\label{thm:necklaces_with_n_or_n_minus_one_special_properties}
The following is true when \(\pi\) is in \(S_n\) and \(\pi_1 = n\) or \(\pi_1 = n-1\), for \(n \geq 2\):
\begin{enumerate}
\item \(T^{-k}(\pi)=\rev(T^{k+1}(\pi))\) for any integer \(k\).

\item The orbit of \(\pi\) has even size. Equivalently, if a topdrop-valid necklace of \(S_n\) contains \(n\) and/or \(n-1\), then it has even size.

\item Let \(2m\) be the orbit size of \(\pi\). Then \(T^{m}(\pi)_1 = n\) or \(T^{m}(\pi)_1 = n-1\). Hence \(n\) and \(n-1\) collectively appear at least twice in the topdrop-necklace. In fact, they appear exactly twice.

\item There are exactly \((n-1)!\) orbits in \(S_n\) which have \(n\) and/or \(n-1\) in their topdrop-necklaces.
\end{enumerate}
\end{theorem}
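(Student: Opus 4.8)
The linchpin of all four parts is a clean description of which permutations are reversed by a single application of the map. The plan is to first establish the equivalence
\[
\sigma_1 \in \{n-1, n\} \iff T(\sigma) = \rev(\sigma)
\]
for every $\sigma \in S_n$. The forward implication is the remark recorded just after the definition of $T$. For the converse I would write $T(\sigma) = \sigma_{\sigma_1+1}\cdots\sigma_n\,\sigma_{\sigma_1}\sigma_{\sigma_1-1}\cdots\sigma_1$ and compare it with $\rev(\sigma)$ entry by entry: the last $\sigma_1$ entries agree automatically, so equality forces the prefix $\sigma_{\sigma_1+1}\cdots\sigma_n$ to equal its own reversal, which is impossible for a string of distinct values unless that prefix has length at most $1$, i.e.\ $\sigma_1 \geq n-1$. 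In tandem I would iterate Lemma~\ref{lem:revinv} to get the conjugation identity $T^{-k} = \rev \circ T^{k} \circ \rev$ for every integer $k$ (using $\rev \circ \rev = \mathrm{id}$). Part (1) is then immediate: since $\pi_1 \in \{n-1,n\}$ gives $\rev(\pi) = T(\pi)$, substituting into $T^{-k}(\pi) = \rev(T^{k}(\rev(\pi)))$ yields $T^{-k}(\pi) = \rev(T^{k+1}(\pi))$.

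For parts (2) and (3) I would read part (1) as a symmetry on orbit positions. Writing the orbit size as $s$ and setting $k = j-1$, part (1) gives $T^{1-j}(\pi) = \rev(T^{j}(\pi))$ for every $j$ modulo $s$, so the involution $j \mapsto 1-j$ on $\mathbb{Z}/s\mathbb{Z}$ carries each orbit element to its reversal. A fixed point $j$ of this involution would make $T^{j}(\pi)$ equal to its own reverse, i.e.\ a palindrome, which no permutation in $S_n$ can be for $n \geq 2$ (equating the first and last entries contradicts injectivity). Since $j \mapsto 1-j$ has a fixed point on $\mathbb{Z}/s\mathbb{Z}$ exactly when $s$ is odd, $s$ must be even; write $s = 2m$. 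For part (3), the characterization says $T^{j}(\pi)_1 \in \{n-1,n\}$ precisely when $T^{j+1}(\pi) = \rev(T^{j}(\pi)) = T^{1-j}(\pi)$, i.e.\ $2j \equiv 0 \pmod{2m}$, so $j \in \{0, m\}$. This gives $T^{m}(\pi)_1 \in \{n-1,n\}$ and shows that these two distinct positions are the only members of the necklace in $\{n-1,n\}$, so $n$ and $n-1$ appear collectively exactly twice.

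Part (4) follows by a double count. Let $B = \{\sigma \in S_n : \sigma_1 \in \{n-1,n\}\}$, so $|B| = 2(n-1)!$. An orbit has $n$ or $n-1$ in its necklace if and only if it meets $B$; choosing a representative $\pi \in B$ for such an orbit, part (3) shows that exactly the positions $0$ and $m$ lie in $B$, so the orbit contains exactly two elements of $B$. Partitioning $B$ by orbit therefore gives $|B|/2 = (n-1)!$ such orbits.

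I expect the main obstacle to be the converse direction of the characterization together with the modular bookkeeping in parts (2)--(3) --- in particular, cleanly arguing that no permutation is a palindrome for $n \geq 2$ and that the fixed-point behavior of $j \mapsto 1-j$ on $\mathbb{Z}/s\mathbb{Z}$ faithfully detects the parity of $s$. Once the characterization and the conjugation identity are in place, the rest is formal manipulation built on Lemma~\ref{lem:revinv}.
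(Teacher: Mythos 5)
Your proposal is correct and takes essentially the same route as the paper: iterate Lemma~\ref{lem:revinv} to obtain \(T^{-k} = \rev \circ T^{k} \circ \rev\), specialize via \(\rev(\pi) = T(\pi)\) to get part (1), exploit the resulting reversal symmetry of the orbit (no permutation equals its own reverse for \(n \geq 2\)) for parts (2) and (3), and double-count the \(2(n-1)!\) permutations beginning with \(n\) or \(n-1\) for part (4). The one point where you add something is your explicit proof of the converse implication \(T(\sigma)=\rev(\sigma) \Rightarrow \sigma_1 \in \{n-1,n\}\), which the paper invokes in part (3) with only the assertion ``which can only happen if''; your reformulation of the contradiction arguments as fixed-point analysis of \(j \mapsto 1-j\) on \(\mathbb{Z}/s\mathbb{Z}\) is a clean but equivalent repackaging.
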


\begin{proof}[Proof of part (1)]
By Lemma \ref{lem:revinv}, \(T^{-k}(\pi) = \rev(T^{k}(\rev(\pi)))\). Since \(\pi_1 = n\) or \(\pi_1 = n-1\), we have \(\rev(\pi) = T(\pi)\).  So \(T^{-k} = \rev(T^{k}(T(\pi))) = \rev(T^{k+1}(\pi))\).
\end{proof}

\begin{proof}[Proof of part (2)]
Assume for contradiction that there is a \(\pi\) satisfying \(\pi_1 = n\) or \(\pi_1 = n-1\) with odd orbit size, so \(\mathcal{O}(\pi) = 2m+1\) for some natural number \(m\). By part (1), \(T^{-m}(\pi) = \rev(T^{m+1}(\pi))\). Since \(T^{-m}(\pi)\) is a permutation in \(\mathcal{O}(\pi)\), \(T^{-m}(\pi) = T^{2m+1}(T^{-m}(\pi)) = T^{m+1}(\pi) = \rev(T^{m+1}(\pi))\), which is impossible.
\end{proof}

\begin{proof}[Proof of part (3)]
By Lemma \ref{lem:revinv} we know \[T^{-m}(\pi) = \rev(T^{m}(\rev(\pi)))\]
\[\rev(T^{-m}(\pi)) = T^{m}(\rev(\pi))\]Since \(\pi_1 = n\) or \(\pi_1 = n-1\), \(T^{m}(\rev(\pi)) = T^{m+1}(\pi)\). Since \(T^{-m}(\pi)\) is in \(\mathcal{O}(\pi)\), \(T^{-m}(\pi) = T^{2m}(T^{-m}(\pi)) = T^{m}(\pi)\). So \(T^{m}(\pi) = \rev(T^{m+1}(\pi)),\) which can only happen if \(T^{m}(\pi)_1 = n\) or \(T^{m}(\pi)_1 = n-1\). So \(n\) and \(n-1\) collectively appear at least twice in the topdrop-necklace.

Assume for contradiction that \(n\) and \(n-1\) appear more than twice in the topdrop-necklace of \(\pi\). Then there is some other \(\ell\) so that \(T^{\ell}(\pi)_1 = n\) or \(T^{\ell}(\pi)_1 = n-1\). Assume without loss of generality that \(0 < \ell < m\). So \(T^{\ell+1}(\pi) = \rev(T^{\ell(\pi)})\). By proposition, \(T^{\ell+1}(\pi) = \rev(T^{-\ell}(\pi))\). So \[\rev(T^{\ell(\pi)}) = \rev(T^{-\ell}(\pi))\] \[T^{\ell}(\pi) = T^{-\ell}(\pi).\] But since the orbit size is \(2m\) and \(0 < \ell < m\), \(T^{\ell}(\pi)\) and \(T^{-\ell}(\pi)\) must be distinct permutations in the orbit, so they cannot be equal.
\end{proof}

\begin{proof}[Proof of part (4)]
When \(n = 1\) the statement holds trivially. For \(n \geq 2\), there are \(2(n-1)!\) permutations in \(S_n\) which begin with \(n\) or \(n-1\). Every orbit which has \(n\) or \(n-1\) in its topdrop-necklace contains exactly two of these permutations. So there are \(2(n-1)!/2 = (n-1)!\) such orbits. 
\end{proof}

\begin{example}\label{ex:n_n_minus_one}
Let \(\pi\) = \(6132574\) in \(S_7\).  The size of \(\mathcal{O}(\pi)\) is eight, an even number. For purposes of illustration, we write out the orbit starting from \(T^{-3}(\pi)\).
\begin{align*}
T^{-3}(\pi) &= 2531647\\
T^{-2}(\pi) &= 3164752\\
T^{-1}(\pi) &= 4752613\\
T^{0}(\pi) &= 6132574\\
T^{1}(\pi) &= 4752316\\
T^{2}(\pi) &= 3162574\\
T^{3}(\pi) &= 2574613\\
T^{4}(\pi) &= 7461352
\end{align*} We read from the front elements that the topdrop-necklace is \[[2,3,4,6,4,3,2,7].\] \(n=7\) and \(n-1=6\) collectively appear exactly twice in the necklace. \(6\) appears at the front of \(T^{0}(\pi)\), and \(7\) appears at the front of \(T^{8/2}(\pi) = T^{4}(\pi)\), since the size of the orbit is eight.

If we pick an integer, say \(k=2\), then we see that \[T^{-2}(\pi) = 3164752 = \rev(T^{3}(\pi)).\]
\end{example}

\begin{theorem}\label{thm:necklaces_with_n_or_n_minus_one_additional_symmetries}
Assume \(\pi\) is in \(S_n\) and \(\pi_1 = n\) or \(\pi_1 = n-1\), for \(n \geq 2\). Relabel \(T^{k}(\pi)\) as \(\pi'\), \(T^{-k}(\pi)\) as \(\pi''\), and \(T^{-k+2}(\pi)\) as \(\pi'''\). Then the following is true:
\begin{enumerate}
\item \(\sseg{1}{>}{\pi'_1}{\pi'_1}{\pi'} = \sseg{1}{>}{\pi'_1}{\pi'_1}{\pi''}\) and \(\sseg{\pi'_1+1}{>}{n}{n-\pi'_1}{\pi'} = \sseg{\pi'_1+1}{<}{n}{n-\pi'_1}{\pi''}\). In other words, \(T^k(\pi)\) and \(T^{-k}(\pi)\) have the first \(T^{k}(\pi)_1\) elements in common, while the last \(n-T^{k}(\pi)_1\) elements are reversed.

\item \(\sseg{1}{>}{n-\pi'_n}{n-\pi'_n}{\pi'} = \sseg{1}{<}{n-\pi'_n}{n-\pi'_n}{\pi'''}\) and \(\sseg{n-\pi'_n+1}{>}{n}{\pi'_n}{\pi'} = \sseg{n-\pi'_n+1}{>}{n}{\pi'_n}{\pi'''}\). In other words, \(T^k(\pi)\) and \(T^{-k+2}(\pi)\) have the last \(T^{k+2}(\pi)_n\) elements in common, while the first \(n-T^{k+2}(\pi)_n\) elements are reversed.
\end{enumerate}
\end{theorem}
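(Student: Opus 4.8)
The plan is to reduce both parts to a single unwinding of the definition of $T$, using part~(1) of Theorem~\ref{thm:necklaces_with_n_or_n_minus_one_special_properties} to trade the negative powers of $T$ for reversals of positive powers. The only substantive input is that earlier result; the rest is bookkeeping with the directional segment notation, organized around the one computation $\rev(T(\sigma))$ for an arbitrary $\sigma \in S_n$.

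For part~(1), I would start from Theorem~\ref{thm:necklaces_with_n_or_n_minus_one_special_properties}(1), which gives $\pi'' = T^{-k}(\pi) = \rev(T^{k+1}(\pi)) = \rev(T(\pi'))$. Applying the definition of $T$ to $\pi'$ yields
\[T(\pi') = \sseg{\pi'_1+1}{>}{n}{n-\pi'_1}{\pi'}\,\sseg{\pi'_1}{<}{1}{\pi'_1}{\pi'},\]
and since reversing a concatenation reverses each block and swaps their order (using $\rev(\sseg{\pi'_1}{<}{1}{\pi'_1}{\pi'}) = \sseg{1}{>}{\pi'_1}{\pi'_1}{\pi'}$), we get
\[\pi'' = \sseg{1}{>}{\pi'_1}{\pi'_1}{\pi'}\,\sseg{n}{<}{\pi'_1+1}{n-\pi'_1}{\pi'}.\]
Reading off the first $\pi'_1$ entries and the last $n-\pi'_1$ entries of this expression gives exactly the two claimed equalities: the heads of $\pi'$ and $\pi''$ agree, and their tails are reverses of one another.

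For part~(2), the key move is to set $\tau = T^{k-1}(\pi)$. Applying Theorem~\ref{thm:necklaces_with_n_or_n_minus_one_special_properties}(1) with $k$ replaced by $k-2$ gives $\pi''' = T^{-k+2}(\pi) = \rev(T^{k-1}(\pi)) = \rev(\tau)$, while $\pi' = T^k(\pi) = T(\tau)$. Expanding $T(\tau) = \sseg{\tau_1+1}{>}{n}{n-\tau_1}{\tau}\,\sseg{\tau_1}{<}{1}{\tau_1}{\tau}$ shows $\pi'_n = \tau_1$, and since $\pi''' = \rev(\tau)$ one also has $\pi'''_n = \tau_1$; this coincidence $\pi'_n = \pi'''_n$ is what makes $\pi'_n$ the natural quantity in the statement. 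Because $\pi'''$ simply lists $\tau$ backwards, comparing the first $n-\tau_1$ and last $\tau_1$ entries of $\pi'$ against those of $\rev(\tau)$ shows the front blocks are reverses of each other and the back blocks coincide, which are the two claimed equalities.

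The hard part will be purely notational rather than conceptual: aligning the $<$ and $>$ arrows on the two sides of each equality (keeping straight which endpoint is listed first under a $<$ arrow), and confirming that the empty-segment convention makes every identity hold in the extreme cases $\pi'_1 \in \{n-1,n\}$ for part~(1) and $\tau_1 \in \{n-1,n\}$ for part~(2), where one of the two blocks vanishes and $T$ reduces to a plain reversal. I would manage this by writing each side of each equality as an explicit indexed list of $\pi'_i$ (respectively $\tau_i$) and checking termwise, so that both parts collapse to the single verified formula for $\rev(T(\sigma))$.
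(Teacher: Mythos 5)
Your proposal is correct, and it rests on the same essential input as the paper's proof --- Theorem \ref{thm:necklaces_with_n_or_n_minus_one_special_properties}(1), which converts negative powers of $T$ into reversals of positive powers --- but it organizes the verification differently. For part (1) the paper argues by induction on $k$, handling $k=0$ with a small case split on whether $\pi'_1$ is $n$ or $n-1$, and then in the inductive step compares $T^{m+1}(\pi)=\rev(\sigma)$ with $T^{-1}(\sigma)$ for $\sigma=T^{-m}(\pi)$; you instead compare $\pi'$ directly with $\pi''=\rev(T(\pi'))$ in one shot. The two computations are dual to one another (the paper splits $\rev(\sigma)$ at position $\sigma_n$, you split $\rev(T(\pi'))$ at position $\pi'_1$), but your version needs no induction, treats all integers $k$ uniformly, and dispenses with the base-case subcases, since the identity $\pi''=\rev(T(\pi'))$ already encodes them. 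For part (2) the difference is more substantive: the paper derives it from part (1) by applying part (1) at index $k-1$ and then reversing both permutations, whereas you prove it independently by setting $\tau=T^{k-1}(\pi)$ and observing that $\pi'=T(\tau)$ and $\pi'''=\rev(\tau)$ share their last $\tau_1$ entries and have reversed initial blocks purely by the definition of $T$. Your route makes part (2) self-contained and makes transparent why the shared-tail length is $\pi'_n=\tau_1$; the paper's route emphasizes that (2) is formally the reversal of (1). Both are valid; as you note, what remains in either case is only the bookkeeping of the directional segment notation and the empty-segment convention when $\pi'_1$ or $\tau_1$ lies in $\{n-1,n\}$, which you have correctly anticipated.
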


\begin{proof}
We prove by induction. If \(k = 0\), then 
\[\sseg{1}{>}{n}{n}{\pi'} = T^{0}(\pi) = T^{-0}(\pi) = \sseg{1}{>}{n}{n}{\pi''},\]
which implies that \(\sseg{1}{>}{\pi'_1}{\pi'_1}{\pi'} = \sseg{1}{>}{\pi'_1}{\pi'_1}{\pi''}\). If \(\pi'_1 = n\), then \(\sseg{\pi'_1+1}{>}{n}{n-\pi'_1}{\pi'}\) and \(\sseg{\pi'_1+1}{<}{n}{n-\pi'_1}{\pi''}\) are both empty, so they are equal. If \(\pi'_1 = n-1\), then \[\sseg{\pi'_1+1}{>}{n}{n-\pi'_1}{\pi'} = \pi'_n = \pi''_n =  \sseg{\pi'_1+1}{<}{n}{n-\pi'_1}{\pi''}.\]

Assume that the statement holds for \(k = m\). Label \(T^{-m}(\pi)\) as \(\sigma\). By Lemma \ref{lem:revinv}, \[T^{m+1}(\pi)=\sseg{n}{<}{1}{n}{\sigma} = \sseg{n}{<}{n-\sigma_n+1}{\sigma_n}{\sigma} \sseg{n-\sigma_n}{<}{1}{n-\sigma_n}{\sigma}.\] Meanwhile, \[T^{-(m+1)}(\pi) = T^{-1}(\sigma) = \sseg{n}{<}{n-\sigma_n+1}{\sigma_n}{\sigma} \sseg{n-\sigma_n}{>}{1}{n-\sigma_n}{\sigma}\] and the statement holds true since the first \(\sigma_n\) elements are in common while the last \(n-\sigma_n\) elements are reversed, closing the induction.
\end{proof}

\begin{example}
In Example \ref{ex:n_n_minus_one}, if we pick \(k=2\) then we see that the first \(T^{-2}(\pi)_1 = 3\) elements of \(T^{-2}(\pi)\) are \(316\), which are also the first three elements of \(T^{2}(\pi)\). The last four elements of \(T^{-2}(\pi)\) are \(4752\), while the last four elements of \(T^{2}(\pi)\) are \(2574\), which is \(4752\) reversed.
\end{example}

\begin{proof}[Proof of part (2)]
By Theorem \ref{thm:necklaces_with_n_or_n_minus_one_special_properties}, \(T^{k}(\pi) = \rev(T^{-k+1}(\pi))\) and \(T^{-k+2}(\pi) =\rev(T^{k-1}(\pi)\). By part (1), \(T^{-k+1}(\pi)\) and \(T^{k-1}(\pi)\) have the first \(T^{-k+1}(\pi)_1\) elements in common while the last \(n - T^{-k+1}(\pi)_1\) elements are reversed. Hence \(T^{k}(\pi)\) and \(T^{-k+2}(\pi)\) have the last \(T^{-k+1}(\pi)_1 = T^{k}(\pi)_n\) elements in common while the first \(n-T^{k}(\pi)_n\) elements are reversed, which is to say that  \(\sseg{1}{>}{n-\pi'_n}{n-\pi'_n}{\pi'} = \sseg{1}{<}{n-\pi'_n}{n-\pi'_n}{\pi'''}\) and \(\sseg{n-\pi'_n+1}{>}{n}{\pi'_n}{\pi'} = \sseg{n-\pi'_n+1}{>}{n}{\pi'_n}{\pi'''}\).
\end{proof}

\begin{example}
In Example \ref{ex:n_n_minus_one}, if we pick \(k=2\) then we see that the last \(T^{-2}(\pi)_n = 2\) elements of \(T^{-2}(\pi)\) are \(52\), which are also the last two elements of \(T^{4}(\pi)\). The first five elements of \(T^{-2}(\pi)\) are \(31647\), while the first five elements of \(T^{4}(\pi)\) are \(74613\), which is \(31647\) reversed.
\end{example}

\section{Orbit Counting}\label{sec:Orbit Counting}

The following theorems show that if we know all the topdrop-valid necklaces of size \(k\) in \(S_n\), then we can count the orbits of size \(k\) in \(S_n\).

\begin{theorem}\label{thm:orbits_with_necklace}
Given a necklace \(\mathcal{N}\) in \(A_n\), the number of distinct orbits with necklace \(\mathcal{N}\) is \[\frac{\left(n-\left(\textnormal{\# of distinct values in }\mathcal{N}\right)\right)!}{P(\mathcal{N})}\]
\end{theorem}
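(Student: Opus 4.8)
The plan is to reconstruct every permutation realizing the necklace from the data of \(\mathcal{N}\) alone, count these reconstructions, and then correct for the multiplicity with which each orbit is counted. The key structural observation is that the topdrop map acts on \emph{positions} in a way that depends only on the current first element: if \(\pi_1 = a\), then from the definition \(T(\pi)_j = \pi_{\sigma_a(j)}\) for a fixed permutation \(\sigma_a\) of \(\{1,\dots,n\}\) (explicitly \(\sigma_a(j)=a+j\) for \(j\le n-a\) and \(\sigma_a(j)=n+1-j\) otherwise). Writing \(\mathcal{N}=[x_0,x_1,\dots,x_{k-1}]\) and \(\pi^{(i)}=T^i(\pi)\), a simultaneous induction gives \(\pi^{(i)}_j = \pi_{\Sigma_i(j)}\) with \(\Sigma_i=\sigma_{x_0}\circ\cdots\circ\sigma_{x_{i-1}}\), so that the composite \(\Sigma_i\) — and in particular the position \(q_i:=\Sigma_i(1)\) — is determined by the necklace and does not depend on \(\pi\) beyond its first elements.

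First I would use this to count permutations whose (linear) first-element sequence is exactly \(x_0,\dots,x_{k-1}\). The requirement \(\pi^{(i)}_1=x_i\) is exactly \(\pi_{q_i}=x_i\). Because \(\mathcal{N}\in A_n\), some permutation realizes it, so the assignment \(x_i\mapsto q_i\) must be consistent: equal necklace values force equal positions and distinct values force distinct positions. Hence exactly \(d\) positions (where \(d\) is the number of distinct values of \(\mathcal{N}\)) are pinned down to hold the necklace values, and the remaining \(n-d\) positions may be filled by the \(n-d\) absent values in any order. This yields exactly \((n-d)!\) permutations, and — crucially — since the \(q_i\) are filling-independent, every one of them really does have first-element sequence \(x_0,\dots,x_{k-1}\).

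Next I must check these all have orbit size exactly \(k\). Validity of \(\mathcal{N}\) also forces \(\Sigma_k=\mathrm{id}\): for a realizing permutation \(\rho\) we have \(\rho_{\Sigma_k(j)}=\rho_j\) for all \(j\), and injectivity of \(\rho\) gives \(\Sigma_k=\mathrm{id}\); hence \(T^k(\pi)=\pi\) for every reconstruction. For a proper divisor \(s\mid k\), the identity \(T^s(\pi)=\pi\) holds iff \(\pi_{\Sigma_s(j)}=\pi_j\) for all \(j\), iff \(\Sigma_s=\mathrm{id}\) (again by injectivity of \(\pi\)); this condition depends only on \(\mathcal{N}\), not on the filling. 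Since at least one reconstruction, namely \(\rho\), has size exactly \(k\), no \(s<k\) can satisfy \(\Sigma_s=\mathrm{id}\), and therefore \emph{all} \((n-d)!\) reconstructions have orbit size exactly \(k\).

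Finally I would account for overcounting. Each orbit of size \(k\) with necklace \(\mathcal{N}\) consists of \(\rho,T(\rho),\dots,T^{k-1}(\rho)\), and the member \(T^j(\rho)\) has first-element sequence \(x_0,\dots,x_{k-1}\) precisely when shifting \(\mathcal{N}\) by \(j\) returns the same sequence. The number of such shifts in \(\{0,\dots,k-1\}\) equals the number of rotational symmetries of \(\mathcal{N}\); since a cyclic sequence of length \(k\) admitting a nontrivial shift-period is the \(P\)-fold repetition of its minimal block, this count is exactly \(P(\mathcal{N})\). Thus each orbit is represented by exactly \(P(\mathcal{N})\) of the \((n-d)!\) reconstructions, and dividing gives \((n-d)!/P(\mathcal{N})\) orbits. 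The main obstacle is the bookkeeping in the middle two steps: cleanly separating what the necklace forces (the positions \(q_i\) and the identities \(\Sigma_s=\mathrm{id}\), all filling-independent) from the free choices, and identifying the rotational-symmetry count with the fundamental period \(P(\mathcal{N})\).
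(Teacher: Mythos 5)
Your proposal is correct and follows essentially the same route as the paper: fix a linear representative of the necklace, observe that it pins down the positions of its \(d\) distinct values, count the \((n-d)!\) ways to fill the remaining positions, and divide by \(P(\mathcal{N})\) because each orbit contains exactly that many such representatives. Your write-up is somewhat more careful than the paper's---the position-tracking permutations \(\Sigma_i\) you introduce are exactly the \(\sigma_{n,p}\) machinery the paper only deploys later, in Section 5, and your explicit check that every filling has orbit size exactly \(k\) is a step the paper leaves implicit.
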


\begin{proof}
Fix a representation \([n_1, n_2, \ldots, n_k]\) of \(\mathcal{N}\) (necklaces are equivalent under circular shifts, but we pick a starting \(n_1\)). Every orbit having \(\mathcal{N}\) as its necklace must have contain some permutation \(\pi\) so that \(\pi_1 = n_1\), \(T^{1}(\pi)_1 = n_2\), \(\ldots\), \(T^{k}(\pi)_1 = n_k\). These criteria precisely dictate the positions where the values of \(\mathcal{N}\) must appear in \(\pi\). Any arrangement of the values which do not appear in \(\mathcal{N}\) in the remaining positions in \(\pi\) results in \(\mathcal{O}\) having necklace \(\mathcal{N}\). There are \(\left(n-\textnormal{\# of distinct values in }\mathcal{N}\right)!\) such permutations in \(S_n\). Let \(B\) the set of these permutations.

If the size of \(\mathcal{N}\) is \(s\), let \(j = s/P(\mathcal{N})\). If \(\pi\) is in \(B\) then \(T^{ij+1}(\pi)\) is also in \(B\) for \(0 \leq i \leq P(\mathcal{N}) - 1\) by the definition of the fundamental period, each \(T^{ij}(\pi)\) must be distinct; no other permutations in \(B\) can be in \(\mathcal{O}(\pi)\). So each orbit having necklace \(\mathcal{N}\) contains \(P(\mathcal{N})\) distinct permutations in \(B\), and no permutation can belong to more than one orbit. So because there are \(\left(n-\textnormal{\# of distinct values in }\mathcal{N}\right)!\) permutations in \(B\), there are \(\frac{\left(n-\left(\textnormal{\# of distinct values in }\mathcal{N}\right)\right)!}{P(N)}\) orbits with necklace \(\mathcal{N}\) in \(S_n\).
\end{proof}

Building off this result, we can sum over all the topdrop-valid necklaces to count the orbits of a particular size.

\begin{theorem}\label{thm:orbit_counting}
For every natural number \(n\) and natural number \(k\), the number of orbits of size \(k\) in \(S_n\) is \[\sum_{\{\mathcal{N} \in A_n \,:\, |\mathcal{N}|=k\}}{\frac{\left(n-\left(\textnormal{\# of distinct values in }\mathcal{N}\right)\right)!}{P(\mathcal{N})}}\]
\end{theorem}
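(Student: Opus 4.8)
The plan is to derive this theorem from Theorem \ref{thm:orbits_with_necklace} by a partitioning argument. The essential observation is that every orbit of size \(k\) in \(S_n\) carries exactly one topdrop-necklace, so the collection of size-\(k\) orbits is partitioned according to which necklace each one has, and Theorem \ref{thm:orbits_with_necklace} counts the orbits in each block of this partition.

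First I would verify that the assignment \(\mathcal{O} \mapsto \mathcal{N}(\mathcal{O})\) is a well-defined map from the orbits of size \(k\) into the index set \(\{\mathcal{N} \in A_n : |\mathcal{N}| = k\}\). Since the necklace of an orbit is the sequence of leading elements read around the orbit, and its size equals the orbit size by definition, any orbit of size \(k\) produces a necklace of size \(k\); because necklaces are identified under circular shifts, starting from a different permutation of the same orbit yields the same necklace, so the map is well-defined on orbits. Each such necklace is topdrop-valid by construction, hence lies in \(A_n\).

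Next I would confirm that the fibers of this map genuinely partition the orbits of size \(k\). Two distinct orbits either receive the same necklace or different ones, so the fibers are disjoint and cover every orbit of size \(k\). I also need exhaustiveness on the index side: every \(\mathcal{N} \in A_n\) with \(|\mathcal{N}| = k\) is realized by at least one orbit, since topdrop-validity of \(\mathcal{N}\) guarantees a permutation \(\pi\) with \(\mathcal{N}(\pi) = \mathcal{N}\), whose orbit then has size \(k\); thus no term in the sum is vacuous. Consequently, summing the number of size-\(k\) orbits over each necklace counts every size-\(k\) orbit exactly once.

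Finally I would invoke Theorem \ref{thm:orbits_with_necklace}, which tells us the number of orbits carrying a fixed necklace \(\mathcal{N}\) equals \(\frac{\left(n-\left(\textnormal{\# of distinct values in }\mathcal{N}\right)\right)!}{P(\mathcal{N})}\), and sum this over all \(\mathcal{N} \in A_n\) with \(|\mathcal{N}| = k\). I do not expect a serious obstacle, since the bulk of the work is already carried by the preceding theorem; the only point demanding care is the bookkeeping that ties orbit size to necklace size and certifies that the partition is simultaneously disjoint and exhaustive, which is where I would concentrate the argument.
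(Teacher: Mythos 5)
Your proposal is correct and follows the same route as the paper, which simply observes that each orbit has exactly one necklace and sums the counts from Theorem \ref{thm:orbits_with_necklace} over all necklaces of size \(k\); your version merely makes the partition and well-definedness checks explicit. No further comment is needed.
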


\begin{proof}
Each orbit has one necklace, so summing the orbit counts from all necklaces gives the total number of orbits.
\end{proof}

We use these theorems to establish orbit counts.

\begin{theorem}\label{thm:orbit_counts}
The following is true of the topdrop map in \(S_n\):
\begin{enumerate}
    \item There are \((n-2)!\) orbits of size two, when \(n \geq 2\)
    \item There are no orbits of size three
    \item There are \((n-2)!\) orbits of size four, when \(n \geq 3\)
    \item The number of orbits of size five, when \(n \geq 7\), is 
        \begin{itemize}
        \item \((n-6)(n-5)!\) orbits of size five if \(n\) is even
        \item \((n-5)(n-5)!\) orbits of size five if \(n\) is odd
        \end{itemize}
    \item The number of orbits of size six, when \(n \geq 4\), is at least        \begin{itemize}
        \item \((2n^2-11n+14)(n-4)!\) orbits of size six when \(n\) is even
        \item \((2n^2-12n+18)(n-4)!\) orbits of size six when \(n\) is odd
        \end{itemize}
    \item The number of orbits of size eight, when \(n \geq 5\), is at least        \begin{itemize}
        \item \(\left( \frac{3}{2}n^3 - \frac{35}{2}n^2 + 64n - 70 \right) (n - 5)!\) if \(n\) is congruent to \(0\) or \(2\) modulo \(6\)
        \item \(\left( \frac{3}{2}n^3 - \frac{35}{2}n^2+ \frac{137}{2}n - \frac{181}{2} \right) (n - 5)!\) if \(n\) is congruent to \(1\) modulo \(6\)
        \item \(\left( \frac{3}{2}n^3 - \frac{35}{2}n^2 + \frac{135}{2}n - \frac{171}{2} \right) (n-5)!\) if \(n\) is congruent to \(3\) or \(5\) modulo \(6\)
        \item \(\left( \frac{3}{2}n^3 - \frac{35}{2}n^2 + 66n - 80 \right) (n-5)!\) if \(n\) is congruent to \(4\) modulo \(6\)
        \end{itemize}
\end{enumerate}
\end{theorem}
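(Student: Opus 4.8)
The plan is to apply Theorem \ref{thm:orbit_counting}, which reduces counting the orbits of a fixed size $k$ to enumerating the topdrop-valid necklaces of size $k$ together with, for each such necklace, its number of distinct values $d$ and its fundamental period $P$. The whole theorem then becomes the combinatorial problem of classifying the size-$k$ necklaces for $k \in \{2,3,4,5,6,8\}$ and evaluating the sum $\sum (n-d)!/P$.

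First I would set up a workable validity criterion for a candidate sequence $[x_1, \ldots, x_k]$. Using the position map induced by $T$ (an old position $p > x$ moves to $p - x$, while $p \le x$ moves to $n+1-p$, where $x$ is the current leading value), I would simulate the evolution of the list of source positions $L^{(0)} = [1, \ldots, n] \mapsto L^{(1)} \mapsto \cdots$, chopping and reversing the first $x_i$ entries at step $i$. This evolution depends only on the necklace values, not on any underlying permutation. Reading off the leading entries $p_i = L^{(i-1)}_1$, the sequence is a valid necklace exactly when (i) the assignment $p_i \mapsto x_i$ is a consistent partial injection (equal positions force equal values and conversely) and (ii) $L^{(k)}$ returns to the identity. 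The same data immediately yields $d$, the number of distinct $x_i$, and $P$, the period of the sequence, as needed for the sum.

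Then I would proceed size by size. For sizes two through five I expect the valid necklaces to fall into a handful of families that can be written down and verified explicitly; the symmetry results (Theorem \ref{thm:necklaces_with_n_or_n_minus_one_special_properties} and Theorem \ref{thm:necklaces_with_n_or_n_minus_one_additional_symmetries}) pin down every necklace containing $n$ or $n-1$ and let me pair up or exclude cases quickly, which is what makes exact counts attainable (and, for size three, lets me show no necklace closes up). The dependence on the parity of $n$, and on $n \bmod 6$ for size eight, should emerge from divisibility conditions governing when a periodic pattern closes and when the injectivity constraint in (i) can be satisfied; I would organize the final summation by collecting the coefficient of each $(n-d)!$ and reducing to a single polynomial in each residue class.

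The main obstacle is the completeness of the necklace classification for sizes six and eight, which is precisely why those items are stated as lower bounds. As $k$ grows, the position simulation wraps through the reversed tail in more and more ways, so I can exhibit and verify explicit families of valid necklaces—yielding the stated lower bounds—without being able to guarantee the list is exhaustive or to resolve every fundamental period. Controlling this wrapping behavior, discharging the attendant injectivity checks, and separating the $n$-dependence cleanly into the stated congruence classes will be the most delicate part of the argument.
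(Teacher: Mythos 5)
Your proposal follows essentially the same route as the paper: reduce to counting topdrop-valid necklaces via Theorem \ref{thm:orbit_counting}, determine the necklaces of each size by tracking how positions and segments move under iteration (case-splitting on where the leading element lands), use Theorem \ref{thm:necklaces_with_n_or_n_minus_one_special_properties} to control necklaces containing \(n\) or \(n-1\), and settle for lower bounds at sizes six and eight where the classification is not exhaustive. The paper carries out exactly this plan through explicit segment-tracking case analyses (Lemmas \ref{lem:neck_two}, \ref{lem:neck_four}, \ref{lem:neck_five} and the size-six and size-eight families), and the parity and mod-\(6\) splits arise just as you predict, from divisibility constraints when counting the admissible necklace parameters.
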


\begin{remark}
There are no orbits of size seven in \(S_n\) for \(2 \leq n \leq 14\). It is unknown whether there are orbits of size seven for larger \(n\).
\end{remark}

\begin{remark}
While the true number of orbits of sizes six and eight is generally greater than these lower bounds, they are not far off, at least through \(S_{14}\), the highest experimentally verified permutation length. In \(S_14\), the lower bound gives 914,457,600 orbits of size six when in reality there are 914,538,240 orbits; the lower bound gives 548,674,560 orbits of size eight when in reality there are 548,691,840 such orbits.
\end{remark}

The proofs of parts (1) and (3)-(6) are each proceeded by one or more lemmas that make statements about the topdrop-validity of certain necklaces. The part of the proof which follows the lemma(s) counts these necklaces and uses Theorem \ref{thm:orbit_counting} to make a statement about the number of orbits. The proof of part (2) stands by itself.

\begin{lemma}\label{lem:neck_two}
A necklace of size two is topdrop-valid in \(S_n\) if and only if it is equivalent to \([n-1, n]\), \(n \geq 2\).
\end{lemma}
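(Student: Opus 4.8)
The plan is to translate ``orbit of size two'' into the single equation \(T^2(\pi)=\pi\) with \(T(\pi)\neq\pi\), which for permutations in the orbit is the same as saying \(T(\pi)=T^{-1}(\pi)\). The entire argument then rests on one elementary fact read straight off the definition: the last entry of \(T(\pi)\) is always \(\pi_1\), since \(T(\pi)=\seg{\pi_1+1}{>}{n}{n-\pi_1}\seg{\pi_1}{<}{1}{\pi_1}\) ends in \(\pi_1\) (this holds even in the degenerate case \(\pi_1=n\), where the first segment is empty and \(T(\pi)=\rev(\pi)\)).

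For the forward direction I would write the orbit as \(\{\pi,\sigma\}\) with \(\sigma=T(\pi)\) and \(T(\sigma)=\pi\). Applying the observation above to both permutations gives \(\sigma_n=\pi_1\) and \(\pi_n=\sigma_1\); setting \(a=\pi_1\) and \(b=\pi_n\), the necklace is \([a,b]\) with \(\sigma_1=b\) and \(\sigma_n=a\). Next I would compute the first entry of \(T(\pi)\): if \(a<n\) it equals \(\pi_{a+1}\), and this must equal \(\sigma_1=b=\pi_n\), so injectivity of \(\pi\) forces \(a+1=n\), i.e. \(a=n-1\); if instead \(a=n\) there is nothing to prove for \(a\). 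Either way \(a\in\{n-1,n\}\). Running the identical computation with the roles of \(\pi\) and \(\sigma\) interchanged yields \(b\in\{n-1,n\}\). Since \(n\geq2\), the entries \(\pi_1\) and \(\pi_n\) are distinct, so \(a\neq b\), and therefore \(\{a,b\}=\{n-1,n\}\); up to the circular-shift equivalence of necklaces, this is exactly \([n-1,n]\).

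For the converse I would exhibit a witness: for \(n\geq2\) choose any \(\pi\in S_n\) with \(\pi_1=n-1\) and \(\pi_n=n\) (for \(n=2\) take \(\pi=12\)). Because \(\pi_1=n-1\), the map reverses, so \(T(\pi)=\rev(\pi)\), whose first entry is \(\pi_n=n\); since \(\rev(\pi)\) begins with \(n\) it reverses in turn, giving \(T^2(\pi)=\rev(\rev(\pi))=\pi\). As \(\pi_1=n-1\neq n=\pi_n\) we have \(T(\pi)\neq\pi\), so the orbit has size exactly two and its necklace is \([n-1,n]\), establishing topdrop-validity.

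The calculations are short, so the only real care needed is boundary bookkeeping rather than conceptual difficulty: reading \(\seg{\pi_1+1}{>}{n}{n-\pi_1}\) correctly when it is empty (the case \(\pi_1=n\)), confirming the constructed orbit has size two rather than one, and remembering that necklaces are identified under rotation so that \([n,n-1]\) and \([n-1,n]\) denote the same object. I expect this edge-case handling to be the main thing to get right.
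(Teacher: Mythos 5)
Your proposal is correct and follows essentially the same route as the paper: the forward direction pins down \(\pi_n = b\) via the observation that the last entry of \(T(\pi)\) is \(\pi_1\) (the paper phrases this as \(T^{-1}(\pi)_1 = T(\pi)_1\)) and then forces \(a,b\in\{n-1,n\}\) from the first-element condition, while the converse uses the identical witness \(\pi_1=n-1\), \(\pi_n=n\). Your explicit injectivity step (\(\pi_{a+1}=\pi_n\Rightarrow a=n-1\)) is a slightly more careful rendering of what the paper calls ``clear from the definition,'' but the argument is the same.
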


\begin{proof}
 When \(n\) is \(2\), there is one orbit in \(S_2\) of size two, and it has the necklace \([1,2] = [n-1, n]\).

Assume \(n \geq 3\).  First, we verify that all necklaces of the form \([n-1, n]\) are topdrop-valid. We must show that there is some \(\pi\) so that \(\mathcal{N}(\pi) = [n-1, n]\). We claim that \(\pi = (n-1)\seg{1}{>}{n-2}{n-2}(n)\) works where the value of \(\pi_i\) is arbitrary for \(1 \leq i \leq n-2\). Indeed,
\[T(\pi) = (n)\seg{n-2}{<}{1}{n-2}(n-1)\]
\[T^2(\pi) = (n-1)\seg{1}{>}{n-2}{n-2}(n) = \pi.\]

Second, we show that any topdrop-valid necklace of size two is equivalent to \([n-1, n]\). Let \([a,b]\) be a necklace of size two for \(S_n\). Any orbit with necklace \([a,b]\) must contain a permutation \(\pi\) so that \(\pi_1 = a\) and \(T(\pi)_1 = b\). In an orbit of size two, \(T(\pi)_1 = T^{-1}(\pi)_1 = b\), which implies that \(\pi_n = b\). So \(\pi = a \seg{2}{>}{n-1}{n-2} b\), and it is clear from the definition of the topdrop map that we must have \(a = n\) or \(a = n-1\) so that \(T(\pi)_1 = b\). Hence \(T(\pi) = b \seg{n-1}{<}{2}{n-2} a\), and similarly we must have \(b = n\) or \(b = n-1\), but \(b\) must not equal \(a\). Hence \([a,b]\) must be equivalent to \([n-1,n]\).
\end{proof}

\begin{proof}[Proof of part (1)]
By Lemma \ref{lem:neck_two}, there is only one term in the sum of Theorem \ref{thm:orbit_counting} for orbits of size two, which is for the necklace \([n-1, n]\). The fundamental period of the necklace is one, and there are always two different values in the necklace, so there are \(\frac{(n-2)!}{1}\) orbits of size two.
\end{proof}

\begin{proof}[Proof of part (2)]
Assume for contradiction that there is a natural number \(n\) and a \(\pi\) in \(S_n\) so that \(\mathcal{O}(\pi)\) is an orbit of size three. Let \(a = \pi_1\), \(b = T(\pi)_1\), and \(c = T^2(\pi)_1\). We cannot have \(a = n\), or else \(b = c\). Then we must have \begin{align*}
\pi &= a \seg{2}{>}{a}{a-1} b \seg{a+1}{>}{n-1}{n-a-1} c \\
T(\pi) &= b \seg{a+1}{>}{n-1}{n-a-1}{c}\seg{a}{<}{2}{a-1} a \\
T^2(\pi) &= {c}\seg{a}{<}{2}{a-1} a \seg{n-1}{<}{a+1}{n-a-1} b
\end{align*} Since \(T^3(\pi) = \pi\), and \(a\) needs to be at the front of \(\pi\), we find \(c = 1+(a+1-2) = a\), which is impossible.
\end{proof}

\begin{lemma}\label{lem:neck_four}
A necklace of size four is topdrop-valid in \(S_n\) if and only if it is equivalent to \([a, n-1,  a, n]\) for some natural number \(a\) with \(1 \leq a \leq n-2\).
\end{lemma}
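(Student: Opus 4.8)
The plan is to prove the two directions separately, dispatching the ``if'' direction by an explicit construction and reserving the real work for the ``only if'' direction.

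For the ``if'' direction I would exhibit a witness. Given $a$ with $1 \le a \le n-2$, set $\pi_1 = a$, $\pi_{a+1} = n-1$, and $\pi_n = n$, and fill the remaining $n-3$ positions with the unused values in any order (the three chosen positions $1$, $a+1$, $n$ are distinct since $1 \le a \le n-2$). Carrying the symbolic entries through the definition of $T$ gives $T(\pi)_1 = \pi_{a+1} = n-1$, so $T^2(\pi) = \rev(T(\pi))$, whence $T^2(\pi)_1 = \pi_1 = a$; one more step shows $T^3(\pi) = \rev(\pi)$, so $T^3(\pi)_1 = \pi_n = n$ and $T^4(\pi) = \rev(T^3(\pi)) = \pi$. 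Reading off first elements yields the necklace $[a, n-1, a, n]$; since $a_1 = n-1 \neq a = a_0$ the orbit size is not $1$, and since $a_3 = n \neq n-1 = a_1$ it is not $2$, so the orbit has size exactly $4$ and $[a,n-1,a,n]$ is topdrop-valid.

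For the ``only if'' direction, suppose $[a_0, a_1, a_2, a_3]$ is a topdrop-valid necklace of size four, realized by an orbit $\pi^{(0)}, \ldots, \pi^{(3)}$ with $\pi^{(i)}_1 = a_i$. The key reduction is to first show that $n$ or $n-1$ occurs among the $a_i$. Granting this, I would rotate the necklace so that $a_0 \in \{n, n-1\}$; then $T(\pi^{(0)}) = \rev(\pi^{(0)})$, so $a_1 = \pi^{(0)}_n = T^{-1}(\pi^{(0)})_1 = a_3$. Writing $a := a_1 = a_3$, Theorem \ref{thm:necklaces_with_n_or_n_minus_one_special_properties} gives (with orbit size $4 = 2m$) that $a_2 = T^{2}(\pi^{(0)})_1 \in \{n, n-1\}$ and that $n, n-1$ together occur \emph{exactly} twice. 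A one-line check rules out $a_0 = a_2$: if both equalled $n$ a short computation forces $a_1 = n-1$, a third occurrence; if both equalled $n-1$ one gets the value $n-1$ at two positions of $\pi^{(0)}$. Hence $\{a_0, a_2\} = \{n-1, n\}$, so $a \in \{1, \ldots, n-2\}$, and the necklace is $[n-1, a, n, a]$ or $[n, a, n-1, a]$, each a rotation of $[a, n-1, a, n]$.

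The main obstacle is the reduction: a size-four orbit must contain $n$ or $n-1$. I would argue by contradiction, assuming $a_i \le n-2$ for all $i$, so no step of $T$ is a pure reversal. The inverse formula gives $\pi^{(i)}_n = a_{i-1}$, and since $T^{2} = T^{-2}$ on a size-four orbit, it yields the backward expression $a_{i+2} = \pi^{(i)}_{\,n - a_{i-1}}$; iterating $T$ twice forward gives $a_{i+2} = \pi^{(i)}_{\,a_i + a_{i+1} + 1}$ when $a_i + a_{i+1} \le n-1$ and $a_{i+2} = \pi^{(i)}_{\,n - a_{i+1}}$ when $a_i + a_{i+1} \ge n$. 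Equating the two and using injectivity of $\pi^{(i)}$, each index $i$ satisfies either the linear relation $a_{i-1} + a_i + a_{i+1} = n-1$ or the equality $a_{i+1} = a_{i-1}$, according to the sign of $a_i + a_{i+1} - n$. A short cyclic case analysis (reduced via the symmetry $\pi^{(i)} \mapsto \pi^{(i+1)}$, which merely rotates the sign pattern) then contradicts every configuration: if every index is linear, subtracting the four relations forces $a_0 = a_1 = a_2 = a_3$, impossible because $\pi^{(i)}_{a_i+1} = a_{i+1}$ cannot equal $\pi^{(i)}_1 = a_i$; if an equality $a_{i+1} = a_{i-1}$ holds at an index where the remaining relations also identify $a_{i+2}$ with $a_i$ or $a_{i-1}$, then position $n - a_{i+1}$ collides with position $1$ or $n$ of $\pi^{(i)}$, forcing $a_{i+1} = n-1$ or $a_{i+1} = 0$, both excluded; and the mixed configurations in which the equality type occurs on only one parity class are killed by combining the linear relations with the defining inequalities $a_i + a_{i+1} \ge n$, which produce a numeric bound violating $a_j \le n-2$. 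Carrying out and organizing this finite case analysis is the technical heart of the argument.
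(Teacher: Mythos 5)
Your proof is correct, but it takes a genuinely different route from the paper's. The paper proves both directions simultaneously by a single four-way case analysis on the second necklace entry $b$ relative to $n-a$ (namely $b<n-a$, $b=n-a$, $n-a<b\le n-2$, and $b\in\{n,n-1\}$), tracking the full permutation through four iterations of $T$ in each case and reading off which configurations survive. You instead separate the two implications: the ``if'' direction by an explicit witness, and the ``only if'' direction by first proving the key reduction that a size-four necklace must contain $n$ or $n-1$, after which Theorem \ref{thm:necklaces_with_n_or_n_minus_one_special_properties} pins the necklace down almost immediately ($a_1=a_3$ from $T(\pi)=\rev(\pi)$, $a_2\in\{n,n-1\}$ from part (3) with $m=2$, and $a_0\ne a_2$ by the exactly-twice clause). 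Your reduction, which equates the forward formula for $T^2(\pi^{(i)})_1$ with the backward formula $\pi^{(i)}_{\,n-a_{i-1}}$ for $T^{-2}(\pi^{(i)})_1$ and uses injectivity to get, at each index, either $a_{i-1}+a_i+a_{i+1}=n-1$ or $a_{i+1}=a_{i-1}$ with $a_i+a_{i+1}\ge n$, does close: an equality at index $i$ forces one at index $i+1$ (since $a_i+a_{i+1}\ge n$ is incompatible with $a_i+a_{i+1}+a_{i+2}=n-1$), so either all four indices are of equality type (whence $a_{i+2}=a_i$ collides position $n-a_{i-1}$ with position $1$, forcing $a_{i-1}=n-1$) or all four are linear (whence all entries are equal, impossible for consecutive necklace entries). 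This dichotomy is actually cleaner than the sketch of mixed parity classes you give, and you could state it that way to shorten the ``technical heart.'' What your approach buys is reuse of the $n$/$n-1$ symmetry machinery and a case structure organized by index type rather than by the size of $b$; what the paper's buys is self-containedness and a template that generalizes directly to the size-five analysis that follows. Both are valid; neither subsumes the other.
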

\begin{proof}
Assume that \([a, b, c, d]\) is a topdrop-valid necklace of size four for \(S_n\). We will split into cases depending on the values of \(a\), \(b\), \(c\), and \(d\) and show that all cases which match the criteria of the proposition work while all other cases produce contradictions. First, if \(a\), \(b\), \(c\), and \(d\) each equal \(n\) or \(n-1\), then \(T^2(\pi) = \pi\) for any \(\pi\) in the orbit and the orbit size would not be four, so assume without loss of generality that \(1 \leq a \leq n-2\).

Let \(\pi = \seg{1}{>}{n}{n}\) be a permutation whose orbit has size four with the necklace \([a, b, c, d]\). Then \(a = \pi_1\). So \[T(\pi) = \seg{a+1}{>}{n}{n-a}\seg{a}{<}{1}{a}\] This implies \(b = \pi_{\pi_1+1}\). In the case that \(b < n-a\), then \[T^2(\pi) = \seg{a+b+1}{>}{n}{n-a-b} \seg{a}{<}{1}{a} \seg{a+b}{<}{a+1}{b},\] and \(c = \pi_{a+b+1}\). Since \(\pi_n\) must be at the front of \(T^{-1}(\pi) = T^{3}(\pi)\), \(c = n-(a+b+1)\), and \[T^3(\pi) = \pi_n \seg{a}{<}{1}{a} \seg{a+b}{<}{a+1}{b} \seg{n-1}{<}{a+b+1}{n-(a+b+1)}.\] Since \(T^{4}(\pi) = \pi,\) \(\pi_1\) must be at the front of \(T^{4}(\pi)\), so \(d = \pi_n = a-1\) meaning \[T^{4}(\pi) = \seg{a+b}{<}{a+1}{b} \seg{n-1}{<}{a+b+1}{n-(a+b+1)} \seg{1}{>}{a}{a} \pi_n.\] But this does not equal \(\pi = \seg{1}{>}{n}{n}\), so we have a contradiction if \(b < n-a\).

In the case that \(b = n-a\), we have \[T^2(\pi) = \seg{a}{<}{1}{a} \seg{n}{<}{a+1}{n-a}\] and \(c = \pi_a\). Since \(\pi_n\) must be at the front of \(T^{-1}(\pi) = T^{3}(\pi)\), \(c = a\), and \[T^3(\pi) = \seg{n}{<}{a+1}{n-a} \seg{1}{>}{a}{a}.\] Since \(T^{4}(\pi) = \pi,\) \(\pi_1\) must be at the front of \(T^{4}(\pi)\). If \(a \neq 1\), then \(d = n-a\). But also \(b=n-a\), and the only way this is possible is if \(c\) equals \(n\) or \(n-1\), which is impossible since \(c = a \leq n-2\). If \(a = 1\), then \(d = n-1\) or \(d = n\).  The case \(d = n-1\) is impossible for the same reason that \(a \neq 1\) is impossible. So we have \(a = 1\), \(b = n-1\), \(c = a\), and \(d = n\). So \[T^{3}(\pi) = \seg{n}{<}{1}{n}\] and from \(d = \pi_n = n\) we find that \[T^4(\pi) = \seg{1}{>}{n}{n} = \pi.\]

If \(b > n-a\) but \(b \leq n-2\), we have \[T^2(\pi) = \seg{n-b}{<}{1}{n-b} \seg{n-b+1}{>}{a}{a+b-n} \seg{n}{<}{a+1}{n-a}\] and \(c = \pi_{n-b}\). Since \(\pi_n\) must be at the front of \(T^{-1}(\pi) = T^{3}(\pi)\), \[c = (n-b)+(a-n+b) = a,\] but this would imply that \(b\) equals \(n\) or \(n-1\).

If \(b > n-a\) and \(b=n\) or \(b = n-1\), we have that \[T^2(\pi) = \seg{1}{>}{a}{a} \seg{n}{<}{a+1}{n-a}\] and \(c = \pi_1 = a\). Then \[T^{3}(\pi) = \seg{n}{<}{1}{n},\] and \(d = \pi_n\) equals \(n\) or \(n-1\). If \(b = n\), then \(d = n-1\), or else \(a\) and \(c\) would each have to equal \(n\) or \(n-1\). Similarly if \(b = n-1\) then \(d = n\). Under these conditions we find that \[T^{4}(\pi) = \seg{1}{>}{n}{n}.\]

From the cases which work without contradictions we find that topdrop-valid necklaces of size four are the necklaces equivalent to \([a, n-1, a, n\)] or \([a, n, a, n-1]\) with \(1 \leq a \leq n-2\), which reduces to just \([a, n-1, a, n]\) since \([a, n, a, n-1]\) is equivalent.
\end{proof}

\begin{proof}[Proof of part (3)]
Since \(1 \leq a \leq n-2\) in the previous proposition, there are \(n-2\) different necklaces in the sum of Theorem \ref{thm:orbit_counting}. Each necklace has \((n-3)\) distinct values, and the fundamental period of each necklace is one. So there are \((n-2)\frac{(n-3)!}{1} = (n-2)!\) orbits of size four. 
\end{proof}

\begin{lemma}\label{lem:neck_five}
A necklace of size five is topdrop-valid in \(S_n\) if and only if it is equivalent to \([1,a,b,c,d]\) where \(a+b = n\), \(c+d=n\), \(c=a+1\), and \(1 \leq a,b,c,d \leq n-2\) where \(a\), \(b\), \(c\) and \(d\) are all different natural numbers.
\end{lemma}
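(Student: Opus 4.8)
The plan is to prove both directions, using the parity restriction to eliminate the large values at the outset. First I would apply Theorem~\ref{thm:necklaces_with_n_or_n_minus_one_special_properties}(2): a necklace of size five has odd size, so it cannot contain $n$ or $n-1$, and hence every entry of a topdrop-valid necklace of size five lies in $\{1,\dots,n-2\}$. This is what makes the bounds $1\le a,b,c,d\le n-2$ natural rather than imposed, and (as the analysis will reveal) it is also what forces the value $1$ to appear.

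For sufficiency I would exhibit, for each admissible parameter, a permutation realizing $[1,a,n-a,a+1,n-a-1]$ (writing $b=n-a$, $c=a+1$, $d=n-a-1$). Take $\pi$ with $\pi_1=1$, $\pi_2=a$, $\pi_{a+1}=a+1$, $\pi_{a+2}=n-a$, and $\pi_n=n-a-1$, and place the remaining values in the remaining positions arbitrarily. A direct computation of $T(\pi),\dots,T^5(\pi)$ shows $T^5(\pi)=\pi$ and that the successive first entries are exactly $1,a,n-a,a+1,n-a-1$. The distinctness hypotheses are precisely the conditions under which the five prescribed positions are distinct (forcing $2\le a\le n-3$) and the five prescribed values are distinct; the collisions at $a=n/2$ and $a=n/2-1$ for $n$ even, and at $a=(n-1)/2$ for $n$ odd, are exactly the parity-dependent exceptions responsible for the differing even and odd counts in Theorem~\ref{thm:orbit_counts}(4).

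For necessity I would argue as in the proof of Lemma~\ref{lem:neck_four}: fix a representative $\pi$ with first entry $a_0$, write $a_i=T^i(\pi)_1$, and expand $T(\pi),\dots,T^4(\pi)$ in the segment notation. Two relations drive the bookkeeping. The identity $T^{i+1}(\pi)_1=T^i(\pi)_{a_i+1}$ records where the next necklace entry sits, while $T^i(\pi)_n=T^{i-1}(\pi)_1=a_{i-1}$ (since the first entry of $T^{-1}(\sigma)$ is $\sigma_n$) pins down the reversed blocks at the back. At each step the reversal boundary falls either inside the unreversed ``rest'' block or inside the reversed front block, so the derivation branches on the order relations among the partial sums $a_0,\,a_0+a_1,\,\dots$ modulo the wrap-around at $n$. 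I would show that every branch but one contradicts $T^5(\pi)=\pi$---just as the dead-end cases do in Lemma~\ref{lem:neck_four}---and that the surviving branch forces an entry equal to $1$ together with $a+b=n$, $c+d=n$, and $c=a+1$ once the necklace is rotated to begin at that entry.

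The main obstacle is exactly this case analysis: with five iterations instead of four the number of branches grows and keeping the segment expressions legible is the bulk of the work. I expect the parity reduction to be the decisive simplification, since guaranteeing $a_i\le n-2$ keeps the ``rest'' block nonempty---indeed of length at least two---at every step and so removes the degenerate splits; the back-linkage $T^i(\pi)_n=a_{i-1}$ then couples the front and back constraints tightly enough to eliminate all but the intended branch.
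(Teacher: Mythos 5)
Your overall strategy matches the paper's: invoke Theorem \ref{thm:necklaces_with_n_or_n_minus_one_special_properties} to exclude $n$ and $n-1$ from any odd-size necklace (so every entry is at most $n-2$ and the ``rest'' block stays nonempty at each step), then expand $T(\pi),\dots,T^5(\pi)$ in segment notation, branching on where the reversal boundary falls, and use the two identities $T^{i+1}(\pi)_1=T^i(\pi)_{a_i+1}$ and $T^i(\pi)_n=T^{i-1}(\pi)_1$ to pin down the necklace entries. Your sufficiency construction ($\pi_1=1$, $\pi_2=a$, $\pi_{a+1}=a+1$, $\pi_{a+2}=n-a$, $\pi_n=n-a-1$, remaining values arbitrary) is correct and checks out in five short computations; it is arguably cleaner than the paper, which instead lets sufficiency fall out of the surviving branches of the necessity casework.

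The gap is that the necessity direction is entirely deferred, and that casework is the substance of the lemma; moreover, the one concrete prediction you make about it is backwards. In the paper's execution there are six branches (two choices for $q$ versus $n-p$, then three for $r$ in each), and exactly \emph{one} of them is contradictory; the other \emph{five} all survive, each forcing the entry $1$ into a different position of the necklace together with the two sum conditions --- i.e.\ each surviving branch is a rotation of the same answer. You cannot collapse these by ``rotating first,'' because the branching is on the concrete representative $\pi$ (which necklace position you start from), not on the necklace up to rotation, so all five surviving branches must be run to completion and shown to yield rotations of $[1,a,b,c,d]$. You will also need to establish up front that all five necklace entries are distinct --- the paper argues that a repeated entry would force some entry to equal $n$ or $n-1$ --- since several of the branch eliminations (e.g.\ the dead-end case where the analysis yields $t=p$ for adjacent necklace entries) rely on that distinctness rather than on a direct contradiction with $T^5(\pi)=\pi$.
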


\begin{proof}
Similar to the proof of the proposition for necklaces of size four, we assume that \([p,q,r,s,t]\) is a topdrop-valid necklace of size five in \(S_n\), split into cases depending on the values of \(p\), \(q\), \(r\), \(s\), and \(t\) and show that all cases which match the criteria of the proposition work while all other cases produce contradictions. We write the necklace with the letters \(p\) through \(t\) instead of \(a\) through \(e\) because each variable from \(p\) through \(t\) may take a different role depending on the case, for example which variable equals \(1\) in the necklace.

Let \(\pi = \seg{1}{>}{n}{n}\) be a permutation whose orbit has size five with the topdrop-necklace \([p,q,r,s,t]\). Because the necklace size is odd, we can safely assume that \(p,q,r,s,t \leq n-2\) because of Theorem \ref{thm:necklaces_with_n_or_n_minus_one_special_properties}. We can also assume that they are all different natural numbers, because if any two were equal, then one would have to equal \(n\) or \(n-1\). Let \(p = \pi_1\), so that \[T(\pi) = \seg{p+1}{>}{n}{n-p} \seg{p}{<}{1}{p}.\]

First, suppose \(q < n-p\). Then \[T^2(\pi) = \seg{p+q+1}{>}{n}{n-p-q} \seg{p}{<}{1}{p} \seg{p+q}{<}{p+1}{q}.\] We split into three further cases: \(r < n-p-q\), \(n-p-q \leq r < n-q\), and \(r \geq n-q\).

Suppose \(r < n-p-q\).  Then \[T^3(\pi) = \seg{p+q+r+1}{>}{n-p-q-r}{} \seg{p}{<}{1}{p} \seg{p+q}{<}{p+1}{q} \seg{p+q+r}{<}{p+q+1}{r}.\] We know \[T^{4}(\pi)_1 = T^{-1}(\pi)_1 = \pi_n,\] so \[s = \pi_{p+q+r+1} = n-(p+q+r+1),\] and \[T^4(\pi) = \pi_n \seg{p}{<}{1}{p} \seg{p+q}{<}{p+1}{q} \seg{p+q+r}{<}{p+q+1}{r} \seg{n-1}{<}{p+q+r+1}{n-(p+q+r+1)}.\] Since \(T^{5}(\pi)_1 = \pi_1\), \[t = \pi_n = 1+(p-1) = p,\] but this is impossible because \(p\) and \(t\) are adjacent in the necklace, so this case is contradictory.

Suppose \(n-p-q \leq r < n-q\). Then \[T^3(\pi) = \seg{n-q-r}{<}{1}{n-q-r} \seg{p+q}{<}{p+1}{q} \seg{n-q-r+1}{>}{p}{p+q+r-n} \seg{n}{<}{p+q+1}{n-p-q}.\] Since \(T^4(\pi)_1\ = \pi_n\), \(d = \pi_{n-b-c} = p+q\), so \[T^4(\pi) = \seg{n}{<}{p+q+1}{n-p-q} \seg{p}{<}{n-q-r+1}{p-n+q+r} \seg{p+1}{>}{p+q}{q} \seg{1}{>}{n-q-r}{n-q-r}.\] Since we assumed \(q < n-p\), we can be sure that \(\seg{n}{<}{p+q+1}{n-p-q}\) is not empty and that segment is thus where \(\pi_n\) resides. Then \(t = \pi_n =  q+r\), so that \(T^{5}(\pi)_1 = \pi_1\). Then \[T^{5}(\pi) = \seg{1}{>}{n-q-r}{n-q-r} \seg{p+q}{<}{p+1}{q} \seg{n-q-r+1}{>}{p}{p+q+r-n} \seg{p+q+1}{>}{n}{n-p-q}.\] Since we assumed \(r < n-q\), we can be sure that \(\seg{1}{>}{n-q-r}{n-q-r}\) is not empty and that is where \(\pi_1\) resides. We need that \[\pi = \seg{1}{>}{n}{n} = T^5(\pi).\] The segment \(\seg{p+q}{<}{p+1}{q}\) must either be empty or contain one element, so \(p+q \leq p+1\), which implies that \(q = 1\), and that the segment is not empty. Since \(\pi_{p+1}\) must immediately precede \(\pi_{p+2}\), we deduce that \(\seg{n-q-r+1}{>}{p}{p+q+r-n}\) is empty. Since \(\seg{1}{>}{n-q-r}{n-q-r}\) is not empty, we need that \(n-q-r+1 = p+q\), which implies \(n-1 = p + r\). Since \(s = p + q = p +1\), we deduce that \(n = r + s\). Since also \(t = r + 1\), we deduce that \(n = p + t\). So when \(q = 1\), \(n = r + s\), and \(n = p + t\), which are the criteria of the proposition, then \(T^5(\pi) = \pi\).

Suppose \(r \geq n-q\). Then \[T^3(\pi) = \seg{n-r+p}{<}{p+1}{n-r} \seg{n-r+p+1}{>}{p+q}{q+r-n} \seg{1}{>}{a}{a} \seg{n}{<}{p+q+1}{n-p-q}.\] Since \(T^4(\pi)_1 = \pi_n\), \(s = p+q\), and \[T^4(\pi) = \seg{n}{<}{p+q+1}{n-p-q} \seg{p}{<}{1}{p} \seg{p+q}{<}{n-r+p+1}{q+r-n} \seg{p+1}{>}{n-r+p}{n-r}.\] Note that since we assumed \(q < n-p\), we can be sure that \(\seg{n}{<}{p+q+1}{n-p-q}\) is not empty and that is where \(\pi_n\) resides. Since \(T^{5}(\pi)_1 = \pi_1\), \(t = n-q-1\), and \[T^{5}(\pi) = \pi_1 \seg{p+q}{<}{n-r+p+1}{q+r-n} \seg{p+1}{>}{n-r+p}{n-r} \seg{2}{>}{p}{p-1} \seg{p+q+1}{>}{n}{n-p-q}.\] Note that we can be sure that \(\seg{p}{<}{1}{p}\) is non-empty, so that is is valid to single out \(\pi_1\) from that segment in this way. Since \(p\) and \(q\) cannot both be one, \(p+q \geq 3\). Since \(\pi_1\) must immediately precede \(\pi_2\), \(\seg{p+q}{<}{n-r+p+1}{q+r-n}\) must be empty. Since \(\seg{p+1}{>}{n-r+p}{n-r}\) is the only segment which can include \(\pi_{p+1}\), which must appear somewhere in \(T^5(\pi)\), \(\seg{p+1}{>}{n-r+p}{n-r}\) is not empty. So \(\pi_1\) immediately precedes \(\pi_{p+1}\), which implies \(p = 1\). So \(\seg{2}{>}{p}{p-1}\) is empty, which implies \(p+q+1=n-r+p+1\) from which we deduce \(q+r = n\). Since \(s = p+q = 1 + q\), and \(t = n-q-1\), we also have \(n = s + t\). So \(p = 1\), \(q+r=n\), and \(s+t=n\), matching the criteria of the proposition, and \([p,q,r,s,t]\) is topdrop-valid whenever these criteria are met. This concludes the analysis of cases where \(q < n-p\).

Second, suppose \(q \geq n-p\). Then \[T^2(\pi) = \seg{n-q}{<}{1}{n-q} \seg{n-q+1}{>}{a}{p+q-n} \seg{n}{<}{p+1}{n-p}.\] We split again into three cases: \(r < n-q\), \(n-q \leq r < p\), and \(r \geq p\).

Suppose \(r < n-q\). Then \[T^3(\pi) = \seg{n-q-r}{<}{1}{n-q-r} \seg{n-q+1}{>}{p}{p+q-n} \seg{n}{<}{p+1}{n-p} \seg{n-q-r+1}{>}{n-q}{r}.\] Since \(T^4(\pi)_1 = \pi_n\), we deduce \(s = p-r\) and \[T^4(\pi) = \seg{n}{<}{p+1}{n-p} \seg{n-q-r+1}{>}{n-q}{r} \seg{p}{<}{n-q+1}{p+q-n} \seg{1}{>}{n-q-r}{n-q-r}.\] Note that we assume \(p \leq n-2\), we can be sure that \(\seg{n}{<}{p+1}{n-p}\) is not empty and that is where \(\pi_n\) resides. So \(t = q + r\) and \[T^5(\pi) = \seg{1}{>}{n-q-r}{n-q-r} \seg{n-q+1}{>}{p}{p+q-n} \seg{n-q}{<}{n-q-r+1}{r} \seg{p+1}{>}{n}{n-p}.\] Note that since we assume \(r < n-q\), \(\seg{1}{>}{n-q-r}{n-q-r}\) is not empty, and we can be sure that is where \(\pi_1\) resides. Since \(r \geq 1\), \(\seg{n-q}{<}{n-q-r+1}{r}\) cannot be empty, which means that \[n-q=n-q-r+1,\] which implies that \(r = 1\). Since the following segment is \(\seg{p+1}{>}{n}{n-p}\), \(n-q+1=p+1\), so \(n=p+q\). Since \(s = p-r = p-1\) and \(t = q+r = q+1\), we also have \(n=s+t\). So \(r = 1\), \(s+t=n\), and \(p+q=n\), matching the criteria of the proposition, and \([p,q,r,s,t]\) is topdrop-valid whenever these criteria are met.

Suppose \(n - q \leq r < p\). Then \[T^3(\pi) = \seg{r+1}{>}{p}{p-r} \seg{n}{<}{p+1}{n-p} \seg{n-q+1}{>}{r}{q+r-n} \seg{1}{>}{n-q}{n-q}.\] Since \(T^4(\pi)_1 = \pi_n\), \(s = p - r\) and \[T^4(\pi) = \seg{n}{<}{p+1}{n-p} \seg{n-q+1}{>}{r}{r+q-n} \seg{1}{>}{n-q}{n-q} \seg{p}{<}{r+1}{p-r}.\] Note that we assume \(p \leq n-2\), we can be sure that \(\seg{n}{<}{p+1}{n-p}\) is not empty and that is where \(\pi_n\) resides. Then \(t = q+r-p\), and \[T^5(\pi) = \seg{1}{>}{n-q}{n-q} \seg{p}{<}{r+1}{p-r} \seg{r}{<}{n-q+1}{r+q-n} \seg{p+1}{>}{n}{n-p}.\] Note that \(\seg{1}{>}{n-q}{n-q}\) is not empty since we assumed that \(q \leq n-2\), so we can be sure that \(\pi_1\) resides in that segment. Since \(\seg{p+1}{>}{n}{n-p}\) can contain at most one element, \(p \leq r + 1\). We assumed \(p > r\), which implies \(p = r + 1\). Since \(s = p-r\), we deduce \(s=1\). Since \(\pi_{r+1} = \pi_p\) must be immediately followed by \(\pi_{p+1}\), the segment \(\seg{r}{<}{n-q+1}{r+q-n}\) must be empty, which implies \(r < n-q+1\). Since we assumed \(r \geq n-q\), we deduce \(r = n-q\), so \(q+r=n\). Since \(t = q + r - p\), we find that \(p + t = q + r = n\). So \(s=1\), \(q+r=n\), and \(s+t=n\), matching the criteria of the proposition, and \([p,q,r,s,t]\) is topdrop-valid whenever these criteria are met.

Suppose \(r \geq p\). Since \(q\) is not equal to \(n\) or \(n-1\), \(r\) is not equal to \(p\), so we can improve this inequality to \(r > p\). Then \[T^3(\pi) = \seg{n+p-r}{<}{p+1}{n-r} \seg{n+p-r+1}{>}{n}{r-p} \seg{p}{<}{n-q+1}{p+q-n} \seg{1}{>}{n-q}{n-q}.\] Since \(T^4(\pi) = \pi_n\), \(s = n-p-1\), and \[T^4(\pi) = \pi_n \seg{p}{<}{n-q+1}{p+q-n} \seg{1}{>}{n-q}{n-q} \seg{n-1}{<}{n+p-r+1}{r-p-1} \seg{p+1}{>}{n+p-r}{n-r}.\] Note that \(\seg{n+p-r+1}{>}{n}{r-p}\) is not empty since \(r > p\), so we can be sure \(\pi_n\) resides in that segment and it is valid to extract \(\pi_n\) as we have from it. Then \(t = p + q + 1 - n\), and \[T^5(\pi) =\seg{1}{>}{n-q}{n-q} \seg{n-1}{<}{n+p-r+1}{r-p-1} \seg{p+1}{>}{n+p-r}{n-r}  \seg{n-q+1}{>}{p}{p+q-n}  \pi_n.\] We note again that \(\seg{1}{>}{n-q}{n-q}\) is not empty since we assumed that \(q \leq n-2\), so we can be sure that \(\pi_1\) resides in that segment. If \(\seg{n-q+1}{>}{p}{p-n+q}\) were non-empty, then \(p = n-1\). Since \(s + p = n-1\), and \(s \geq 1\), this is impossible. So \(\seg{n-q+1}{>}{p}{p-n+q}\) is empty. So \(p < n-q+1\), implying \(p + q \leq n\). Since we assumed \(q \geq n-p\), we deduce \(p + q = n\). Since \(t = p + q + 1 - n\), we find \(t = 1\). If \(\seg{p+1}{>}{n+p-r}{n-r}\) were empty, then \(n + p - r < p + 1\), so \(n-r < 1\), which would imply \(r = n\). This would necessitate \(q=s\), so \(p+q=n\) would imply \(p+s=n\). But \(s = n-p-1\), which implies \(p+s=n-1\). So \(\seg{p+1}{>}{n+p-r}{n-r}\) is not empty. Because \(\pi_{n+p-r}\) is followed immediately by \(\pi_n\), we deduce \(n+p-r+1=n\), which implies \(p = r-1\). Together with \(s = n-p-1\), this implies that \(r+s = n\). So we have that \(t=1\), \(p+q=n\), and \(r+s=n\), matching the criteria of the proposition, and \([p,q,r,s,t]\) is topdrop-valid whenever these criteria are met.
\end{proof}

\begin{proof}[Proof of part (4)]
We want to determine the sum of Theorem \ref{thm:orbit_counting}, and the previous lemma determines precisely which necklaces have terms in that sum. When \(n \leq 6\), there do not exist integers \(a\), \(b\), \(c\), and \(d\) fitting the criteria of the proposition.

Suppose \(n\) is odd, and \(n \geq 7\). \(a\) cannot equal \(1\), since it comes immediately after \(1\) in the necklace. 

If \(2 \leq a \leq (n-3)/2\), then \(b=n-a\) is distinct from \(a\) since \(n\) is odd so that \(a \neq n-a\), \(c = a+1\) is distinct from \(a\), and \(c\) is distinct from \(b\) since \(c=b\) would imply \(n=2a+1\), but \(2a+1 \leq 2(n-3)/2+1 =  n-2\). \(d = n-c = n-a-1\) is distinct from \(a\) because otherwise \(n=2a+1\), \(d\) is distinct from \(b\) because \(n-a \neq n-a-1\), and \(d\) is distinct from \(c\) because \(n\) is odd. Since \(2 \leq a \leq (n-3)/2\) also guarantees that  \(1 \leq a,b,c,d \leq n\), there is one topdrop-valid necklace of size five for every value of \(a\) from \(2\) through \((n-3)/2\), where the values of \(b\), \(c\), and \(d\) are determined by \(a\).

If \(a = (n-3)/2+1\), then \[b = n-a = n-(n-3)/2-1 = (n-3)/2+2,\] but also \(c = a+1 = (n-3)/2+2\), which violates that \(b\) and \(c\) must be distinct.

If \((n-3)/2+2 \leq a \leq n-3\), then \(b = n-a\) is distinct from \(a\) since \(n\) is odd so that \(a \neq n-a\). \(c=a+1\) is distinct from \(a\), and \(c\) is distinct from \(b\) because otherwise \(n = 2a+1\), but \(2a+1 \geq 2((n-3)/2+2) = n+1\). \(d = n-c = n-a-1\) is distinct from \(a\) because otherwise \(n=2a+1\), \(d\) is distinct from \(b\) because \(n-a \neq n-a-1\), and \(d\) is distinct from \(c\) because \(n\) is odd. Since \((n-3)/2+2 \leq a \leq n-3\) also guarantees that \(1 \leq a,b,c,d \leq n\), there is one topdrop-valid necklace of size five for every value of \(a\) from \((n-3)/2+2\) through \(n-3\).

As stated in the proof of the proposition, we know \(a\) is not equal to \(n\) or \(n-1\). If \(a = n-2\), then \(d = 1\), but then \(1\) would follow \(d=1\) in the necklace, which is impossible. Since \(a \leq n\), we have covered all cases.

There are \(n-5\) values of \(a\) so that \(2 \leq a \leq (n-3)/2\) or \((n-3)/2+2 \leq a \leq n-3\), and there is one topdrop-valid necklace with five distinct values for each value. The fundamental period of each necklace is one. Thus according to Theorem \ref{thm:orbit_counting}, there are \((n-5)(n-5)!\) orbits of size five when \(n\) is odd.

Suppose \(n\) is even, and \(n \geq 8\). Again, \(a\) cannot equal \(1\), since it comes immediately after \(1\) in the necklace.

If \(2 \leq a \leq n/2-2\), then \(b=n-a\) is distinct from \(a\), since if \(b = a\) then \(n=2a\) so \(a = n/2>n/2-2\). \(c = a+1\) is distinct from \(a\), and \(c\) is distinct from \(b\) since \(b=c\) would imply \(n=2a+1\), but \(n\) is even. \(d = n-c=n-a-1\) is distinct from \(a\) since \(n \neq 2a+1\), \(d\) is distinct from \(b\) since \(n-a\neq n-a-1\), and \(d\) is distinct from \(a\) since \(n\neq2a+1\). Since \(2 \leq a \leq n/2-2\) also guarantees that \(1 \leq a,b,c,d \leq n\), there is one topdrop-valid necklace of size five for every value of \(a\) from \(2\) through \(n/2-2\).

If \(a = n/2-1\), then \(c = n/2\), so \(d = n-n/2 = n/2\), so \(c\) and \(d\) are not distinct.

If \(a = n/2\), then \(b = n-a = n/2\), so \(a\) and \(b\) are not distinct.

If \(n/2+1 \leq a \leq n-3\), then \(b=n-a\) is distinct from \(a\), since otherwise \(n=2a\), so \(a = n/2 < n/2+1\). \(c=a+1\) is distinct from \(a\), and \(c\) is distinct from \(b\) because otherwise \(n=2a+1\) and \(n\) would be odd. \(d=n-c=n-a-1\) is distinct from \(c\) because otherwise \(n=2a+1\), \(d\) is distinct from \(b\) since \(n-a\neq n-a-1\), and \(d\) is distinct from \(a\) because otherwise \(n=2a+1\). \(n/2+1 \leq a \leq n-3\) also guarantees that \(1 \leq a,b,c,d \leq n\), so there is one topdrop-valid necklace of size five for every value of \(a\) from \(n/2+1\) through \(n-3\).

As in the case where \(n\) is odd, \(a\) must not equal \(n\), \(n-1\), or \(n-2\).

There are \(n-6\) values of \(a\) where \(2 \leq a \leq n/2-2\) or \(n/2+1 \leq a \leq n-3\), and there is one topdrop-valid necklace with five distinct values for each value. The fundamental period of each necklace is one. Thus according to Theorem \ref{thm:orbit_counting}, there are \((n-6)(n-5)!\) orbits of size five when \(n\) is odd.
\end{proof}

\begin{lemma}
Necklaces of the form \([a,b,n,b,a,n]\) or \([a,b,n-1,b,a,n-1]\) with \(1 \leq a,b \leq n-2\), \(a+b = n-1\), and \(a\) and \(b\) distinct are topdrop-valid in \(S_n\), where \(n \geq 4\). Also topdrop-valid are necklaces of the form \([a,b,n-1,b,a,n]\) with \(1 \leq a,b \leq n-2\), \(a+b \neq n-1\), and \(a\) and \(b\) distinct.
\end{lemma}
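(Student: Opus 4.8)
The plan is to prove topdrop-validity directly, by the same constructive method used for the size-two, -four, and -five necklaces: for each of the three families I will exhibit an explicit witness permutation $\pi \in S_n$ whose first element is the first entry of the prescribed necklace, and then compute $T(\pi), T^2(\pi), \ldots, T^6(\pi)$, checking that the first elements read off the claimed necklace and that $T^6(\pi)=\pi$. Each of the three patterns $a,b,n,b,a,n$; $\;a,b,n-1,b,a,n-1$; and $a,b,n-1,b,a,n$ forces the orbit size to be exactly $6$: no proper divisor $d \in \{1,2,3\}$ of $6$ can give a period-$d$ sequence of first elements once $a\neq b$ and $n,n-1\notin\{a,b\}$ (e.g.\ $d=3$ would force $a=b$ and $d=2$ would force $a\in\{n,n-1\}$). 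Hence $\mathcal{N}(\pi)$ equals the given necklace, which is exactly the assertion of topdrop-validity.

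For the witnesses I place the necklace values at forced positions and leave the remaining $n-(\#\text{distinct values})$ entries free. For the family $[a,b,n,b,a,n]$ with $a+b=n-1$ I take $\pi_1=a$, $\pi_{a+1}=b$, $\pi_n=n$, and fill the other positions with the unused values in any order; the family $[a,b,n-1,b,a,n-1]$ uses the identical placement with $n$ replaced by $n-1$ in the last slot. The key simplification is the remark following the definition of $T$: whenever a permutation begins with $n$ or $n-1$, the map $T$ acts simply as $\rev$. Thus two of the six applications (the ones in which the current permutation begins with $n$ or $n-1$) are plain reversals, and the hypothesis $a+b=n-1$ is precisely what makes $b+1=n-a$, so that applying $T$ to a permutation beginning with $b$ pulls the tail entry (which we have set to $n$, resp.\ $n-1$) to the front at the next step. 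I will carry out the six applications in the segment notation of Section \ref{sec:Fundamental Properties of the Topdrop Map} and confirm that the free entries are never constrained, so that every filling realizes the same necklace.

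For the family $[a,b,n-1,b,a,n]$ with $a+b\neq n-1$ I again set $\pi_1=a$, $\pi_{a+1}=b$, $\pi_n=n$, but the position of the $n-1$ entry depends on $a+b$: if $a+b<n-1$, the entry $b$ at the front of $T(\pi)$ reverses an initial block that does not reach the tail, placing $n-1$ at position $a+b+1$, whereas if $a+b>n-1$ the reversed block wraps past the tail and the correct position is $n-b$. I will therefore split this family into these two subcases, set $\pi_{a+b+1}=n-1$ or $\pi_{n-b}=n-1$ accordingly, and verify that each closes up after six steps. In both subcases the hypotheses $1\le a,b\le n-2$, $a\neq b$, and $a+b\neq n-1$ guarantee that the four forced positions $1$, $a+1$, $n$, and the $n-1$ slot are distinct and lie in range.

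The routine part is each six-step computation; the main obstacle is the index bookkeeping in the third family, where the location of the $n-1$ entry — and hence the exact segment decomposition at every subsequent step — flips according to the sign of $a+b-(n-1)$. Keeping the wrap-around of the reversed initial block straight in the segment notation, and confirming in each subcase that the free positions remain genuinely unconstrained so that all $\bigl(n-(\#\text{distinct values})\bigr)!$ fillings feed correctly into Theorem \ref{thm:orbits_with_necklace}, is the delicate part the proof must handle.
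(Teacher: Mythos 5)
Your construction of the witness permutations is the same as the paper's (force $\pi_1=a$, $\pi_{a+1}=b$, $\pi_n=n$ or $n-1$, and for the third family place $n-1$ at position $a+b+1$ or $n-b$ according to the sign of $a+b-(n-1)$), and your index bookkeeping for where $T^2(\pi)_1$ lands is right; I checked that all three families do close up after six steps with the free entries unconstrained, so your plan is correct. Where you genuinely diverge is in how the full necklace is deduced. You propose to grind out all six applications of $T$ explicitly and then rule out orbit sizes $1,2,3$ by inspecting the sequence of first elements. The paper instead stops after computing $T(\pi)$ and $T^2(\pi)$: once it sees that a permutation beginning with $n$ (or $n-1$) occurs in the orbit and that the consecutive necklace fragment $n,a,b,n$ (resp.\ $n,a,b,n-1$) appears, it invokes Theorem \ref{thm:necklaces_with_n_or_n_minus_one_special_properties} --- even orbit size, the reversal symmetry $T^{-k}=\rev(T^{k+1})$, and the fact that $n$ and $n-1$ collectively occur exactly twice --- to conclude that the orbit has size six and the necklace is the stated palindrome-like pattern, with no further computation. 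Your route is more elementary and self-contained (it never needs the ``exactly twice'' statement), but it costs four extra segment decompositions per family, and the wrap-around case $a+b>n-1$ makes those steps genuinely fiddly; the paper's route buys all of that for free from structure it has already proved, and is the intended reason Section \ref{sec:Topdrop-necklaces} precedes the orbit counts. One small economy you could still adopt without changing your approach: by Lemma \ref{lem:revinv}, once you reach the permutation $\rev(\pi)$ at step five (as you do in every family), the sixth step is automatic, and symmetry about the step where the first element is $n$ or $n-1$ lets you skip half of the remaining explicit computations.
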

\begin{proof}
First, we show that necklaces of the form \([a,b,n,b,a,n]\) with \(1 \leq a,b \leq n-2\) and \(a+b = n-1\) are topdrop-valid. The proof for necklaces of the form \([a,b,n-1,b,a,n-1]\) is identical.

Let \(\pi\) be a permutation in \(S_n\) with \(\pi_1 = a\), \(\pi_{a+1} = b\), and \(\pi_n = n\), and assume \(1 \leq a,b \leq n-2\) and \(a+b=n-1\). Then \(T(\pi) = \seg{a+1}{>}{n}{n-a} \seg{a}{<}{1}{a}\), and \(T^2(\pi) = \pi_n \seg{a}{<}{1}{a} \seg{n-1}{<}{a+1}{n-a-1}\) because \(\pi_{a+1} = b = n-a-1\). Since \(T^{-1}(\pi)_1 = \pi_n = n\), we know that "\(n\) \(a\) \(b\) \(n\)" is part of the necklace, and by Theorem \ref{thm:necklaces_with_n_or_n_minus_one_special_properties} we deduce that the full necklace is \([a,b,n,b,a,n]\).

Second, we show that necklaces of the form \([a,b,n-1,b,a,n]\) with \(1 \leq a,b \leq n-2\) and \(a+b \neq n-1\) are topdrop-valid. Let \(\pi\) be a permutation in \(S_n\) with \(\pi_1 = a\), \(\pi_{a+1} = b\), and \(\pi_n = n\). Assume \(1 \leq a,b \leq n-2\) and \(a+b\neq n-1\). Then \(T(\pi) = \seg{a+1}{>}{n}{n-a} \seg{a}{<}{1}{a}\), and since now we assume that \(a+b \neq n-1\), \(T^2(\pi)_1\) cannot be \(\pi_n\). Nor can \(T^2(\pi)_1\) be \(a\) or \(b\), but it may be any other possible value, including \(n-1\). Letting \(T^2(\pi)_1\) be \(n-1\), and remembering \(T^{-1}(\pi)_1 = \pi_n\), we find that "\(n\) \(a\) \(b\) \(n-1\)" is part of the necklace. By Theorem \ref{thm:necklaces_with_n_or_n_minus_one_special_properties} the full necklace is \([a,b,n-1,b,a,n]\).
\end{proof}

\begin{remark}
For \(n \leq 12\), all of the necklaces of size six for \(S_n\) have one of the above forms. For \(n\) greater than \(12\) this is no longer the case. For example, \([2,5,6,3,4,7]\) is valid in \(S_{13}\).
\end{remark}

\begin{proof}[Proof of part (5)]
First, we count necklaces of the form \([a,b,n,b,a,n]\) or \([a,b,n-1,b,a,n-1]\). When \(n\) is even, there are \(\frac{n-2}{2}\) ways to choose two distinct numbers that sum to \(n-1\) from the natural numbers \(1\) through \(n\). So there are \(\frac{n-2}{2}\) possible pairs of \(a\) and \(b\) for each of the two types of necklaces, and thus in all \(n-2\) necklaces . When \(n\) is odd, there are \(\frac{n-3}{2}\) ways to choose two numbers summing to \(n-1\), and \(n-3\) necklaces in all. There are three different values in each necklace, and the fundamental period of each necklace is one.

Second, we count necklaces of the form \([a,b,n-1,b,a,n]\) where \(a\) and \(b\) do not sum to \(n-1\). If we ignore the restriction on the sum, there are \((n-2)(n-3)\) choices for \(a\) and \(b\). When \(n\) is even, all but \(n-2\) of these do not sum to \(n-1\), and when \(n\) is odd all but \(n-3\) of these do not sum to \(n-1\). There are four different values in each necklace, and the fundamental period of each necklace is one.

By Theorem \ref{thm:orbit_counting}, we find that when \(n\) is even there are \([(n-2)(n-3)-(n-2)](n-4)!+(n-2)(n-3)!\) orbits corresponding to these forms, and when \(n\) is odd there are \([(n-2)(n-3)-(n-3)](n-4)!+(n-3)(n-3)!\) orbits corresponding to these forms. These counts simplify to the given formulas.
\end{proof}

\begin{lemma}
Necklaces of the form \([a,b,c,n,c,b,a,n]\) or \([a,b,c,n-1,c,b,a,n-1]\) with \(1 \leq a,b,c \leq n-2\), \(a+b+c = n-1\), and \(a\), \(b\), and \(c\) distinct are topdrop-valid in \(S_n\), where \(n \geq 4\). Also topdrop-valid are necklaces of the form \([a,b,c,n-1,c,b,a,n]\) with \(1 \leq a,b,c \leq n-2\), \(a+b+c \neq n-1\), \(a+b\neq n-1\), \(b+c\neq n-1\), and \(a\), \(b\), and \(c\) distinct.
\end{lemma}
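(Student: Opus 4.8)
The plan is to follow the template of the preceding size-six lemma: for each prescribed necklace I exhibit a single permutation $\pi$ realizing it, by fixing the values of $\pi$ in exactly the positions forced by the first few front-elements and leaving the rest arbitrary, and then invoke Theorem \ref{thm:necklaces_with_n_or_n_minus_one_special_properties} to force the palindromic second half of the necklace and pin the orbit size at eight. The mechanism I will use throughout is to track the position of the distinguished value ($n$ or $n-1$, placed initially in position $n$) as it migrates toward the front: one application of $T$ decreases its position by the current front-element, as long as it remains in the first block $\seg{\pi_1+1}{>}{n}{n-\pi_1}$. Since $T^{-1}(\pi)_1=\pi_n$, putting the distinguished value in position $n$ makes it the front-element one step before $\pi$, which supplies the single occurrence that Theorem \ref{thm:necklaces_with_n_or_n_minus_one_special_properties} then reflects.

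For the two ``pure'' families $[a,b,c,n,c,b,a,n]$ and $[a,b,c,n-1,c,b,a,n-1]$ I will set $\pi_1=a$, $\pi_{a+1}=b$, $\pi_{a+b+1}=c$, and $\pi_n=n$ (respectively $n-1$). Because $a+b+c=n-1$ and $a,b,c\ge 1$, the chosen indices $1<a+1<a+b+1\le n-1<n$ are distinct and valid, and every partial sum stays below $n$, so no regime-splitting is needed: a direct computation gives $T(\pi)_1=b$ and $T^2(\pi)_1=c$, while the distinguished value migrates from position $n$ to $n-a$ to $n-a-b$ and, since $a+b+c=n-1$, finally to position $1$, so $T^3(\pi)_1=n$ (respectively $n-1$). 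Together with $T^{-1}(\pi)_1=n$ this produces the contiguous block $n,a,b,c,n$ of front-elements (and the analogous block for the $n-1$ version, where a leading $n-1$ reverses the permutation just as a leading $n$ does), and Theorem \ref{thm:necklaces_with_n_or_n_minus_one_special_properties} completes the necklace to size eight.

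For the ``mixed'' family $[a,b,c,n-1,c,b,a,n]$ I again fix $\pi_1=a$, $\pi_{a+1}=b$ and $\pi_n=n$, but now I locate the $\pi$-positions $p_2$ and $p_3$ from which $T^2(\pi)_1$ and $T^3(\pi)_1$ are drawn, set $\pi_{p_2}=c$ and $\pi_{p_3}=n-1$, and invoke Theorem \ref{thm:necklaces_with_n_or_n_minus_one_special_properties}. The three hypotheses are precisely the non-degeneracy conditions that make this work: $a+b\ne n-1$ guarantees $p_2\ne n$, so placing $c$ does not disturb the value $n$ in position $n$; $a+b+c\ne n-1$ guarantees $p_3\ne n$, so the step-three front is free to be $n-1$ rather than being forced to equal $n$; and $b+c\ne n-1$ is needed in the regime $a+b+c\ge n$, where $p_3=n-b-c$ and the condition is exactly what keeps $p_3\ne 1$ (otherwise $T^3(\pi)_1$ would equal $\pi_1=a$ and the necklace would read $[a,b,c,a,\ldots]$).

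The main obstacle is the mixed family's dependence on which block each front-preimage lands in: unlike the pure families, here $a+b$ and $a+b+c$ are unconstrained and may exceed $n$, so $p_2$ and $p_3$ have different closed forms in different regimes (for instance $p_2=a+b+1$ when $a+b\le n-2$ but $p_2=n-b$ when $a+b\ge n$), exactly as in the size-five lemma. The bulk of the work is to run these regimes and verify in each that the three inequalities, together with $a,b,c$ distinct and $1\le a,b,c\le n-2$, keep the five positions $1,\,a+1,\,p_2,\,p_3,\,n$ pairwise distinct and carrying the intended values; several of the needed non-coincidences (such as $p_2\ne p_3$) fall out automatically from the regime inequalities, but tracking them all is the delicate bookkeeping. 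Once a consistent $\pi$ is produced in every regime, Theorem \ref{thm:necklaces_with_n_or_n_minus_one_special_properties} finishes each case uniformly, and a quick check that the cyclic word $a,b,c,n-1,c,b,a,n$ admits no proper period confirms that the orbit, and hence the necklace, has size exactly eight.
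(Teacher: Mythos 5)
Your proposal matches the paper's proof in both structure and substance: for each family you construct a witness permutation by pinning $\pi_1=a$, $\pi_{a+1}=b$, the appropriate later position to $c$ (and, in the mixed case, the next front-position to $n-1$), place the distinguished value in position $n$ so that $T^{-1}(\pi)_1=n$, split the mixed case into regimes according to whether $a+b$ overshoots $n$, and invoke Theorem \ref{thm:necklaces_with_n_or_n_minus_one_special_properties} to reflect the half-necklace into the full palindromic necklace of size eight. This is exactly the paper's argument, so no further comparison is needed.
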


\begin{proof}
First, we show that necklaces of the form \([a,b,c,n,c,b,a,n]\) with \(1 \leq a,b,c \leq n-2\) and \(a+b+c = n-1\) are topdrop-valid. The proof for necklaces of the form \([a,b,c,n-1,c,b,a,n-1]\) is identical. We show that it is always possible to construct a permutation with this necklace. Let \(\pi\) be a permutation in \(S_n\) with \(\pi_1 = a\), \(\pi_{a+1} = b\), \(\pi_{a+b+1} = c\) and \(\pi_n = n\), and assume \(1 \leq a,b,c \leq n-2\),  \(a+b+c=n-1\), and \(a\), \(b\), and \(c\) are distinct. Then \(T(\pi) = \seg{a+1}{>}{n}{n-a} \seg{a}{<}{1}{a}\). Since \(b = n-a-c-1 < n-a\), \(T^2(\pi) = \seg{a+b+1}{>}{n}{n-a-b} \seg{a}{<}{1}{a} \seg{a+b}{<}{a+1}{b}\). Since \(c = n-a-b-1\), \(T^3(\pi) = \pi_n \seg{a}{<}{1}{a} \seg{a+b}{<}{a+1}{b} \seg{n-1}{<}{a+b+1}{c}\). Since \(T^{-1}(\pi)_1 = \pi_n = n\), we know that "\(n\) \(a\) \(b\) \(c\) \(n\)" is part of the necklace, and by Theorem \ref{thm:necklaces_with_n_or_n_minus_one_special_properties} the full necklace is \([a,b,c,n,c,b,a,n]\).

Second, we show that necklaces of the form \([a,b,c,n-1,c,b,a,n]\) with \(1 \leq a,b,c \leq n-2\), \(a+b+c \neq n-1\), \(a+b\neq n-1\), \(b+c\neq n-1\), and \(a\), \(b\), \(c\) distinct are topdrop-valid. Again, we show how to construct a permutation with this necklace. Let \(\pi\) be a permutation in \(S_n\) with \(\pi_1 = a\), \(\pi_{a+1} = b\), and \(\pi_n = n\). Assume \(1 \leq a,b,c \leq n-2\), \(a+b\neq n-1\), \(b+c\neq n-1\), \(a+b+c\neq n-1\), and that \(a\), \(b\), and \(c\) are distinct. Then \(T(\pi) = \seg{a+1}{>}{n}{n-a} \seg{a}{<}{1}{a}\). We consider both \(b<n-a-1\) or \(b>n-a-1\) and show that the necklace is topdrop-valid in either case.

If \(b<n-a-1\), then \(T^2(\pi) = \seg{a+b+1}{>}{n}{n-a-b} \seg{a}{<}{1}{a} \seg{a+b}{<}{a+1}{b}\). Let \(\pi_{a+b+1}=c\), which is valid since we have not yet specified \(\pi_{a+b+1}\), while \(\pi_{a+b+1}\) is not equal to the values we have defined, namely\(\pi_1\), \(\pi_{a+1}\), or \(\pi_{n}\), of which none equals \(c\). Since \(c\neq n-a-b-1\), \(T^3(\pi)_1\neq \pi_n\). Since \(c\neq n-b-1\), \(T^3(\pi)_1 \neq \pi_1\). Since \(c\) is not \(n\) or \(n-1\), \(T^3(\pi)_1 \neq \pi_{a+1}\). So \(T^3(\pi)_1\) is not any of the elements we have defined in our construction, so we are free to define the element \(T^3(\pi)_1\) to be \(n-1\). We deduce that part of the necklace is "\(n\) \(a\) \(b\) \(c\) \(n-1\)",  and by Theorem \ref{thm:necklaces_with_n_or_n_minus_one_special_properties} the full necklace is \([a,b,c,n-1,c,b,a,n]\).

If \(b>n-a-1\), then \(T^2(\pi) = \seg{n-b}{<}{1}{n-b} \seg{a}{<}{n-b+1}{a+b-n} \seg{n}{<}{a+1}{n-a}\). Since \(c \neq n-b-1\), \(T^3(\pi)_1 \neq \pi_1\). Since \(c\) is not \(n\) or \(n-1\), \(T^3(\pi)_1 \neq \pi_{a+1}\). Since \(c\neq a\), \(T^3(\pi)_1 \neq \pi_n\). Again we deduce that we are free to choose that \(T^3(\pi)_1 = n-1\), so that part of the necklace is "\(n\) \(a\) \(b\) \(c\) \(n-1\)",  and by Theorem \ref{thm:necklaces_with_n_or_n_minus_one_special_properties} the full necklace is \([a,b,c,n-1,c,b,a,n]\).
\end{proof}

\begin{remark}
When \(n \leq 10\), all of the necklaces of size eight for \(S_n\) have one of the above forms. For \(n\) greater than \(10\) this is no longer the case. For example, \([2, 5, 6, 3, 8, 7, 4, 9]\) is valid in \(S_{11}\).
\end{remark}

\begin{lemma}\label{lem:eight_necklaces_same_of_n_n_minus_1}
The number of topdrop-valid necklaces of the form \([a,b,c,n,c,b,a,n]\) or \([a,b,c,n-1,c,b,a,n-1]\) with \(1 \leq a,b,c \leq n-2\), \(a+b+c = n-1\), and \(a\), \(b\), and \(c\) distinct is at least:
\begin{itemize}
\begin{item}
\(\frac{1}{2}n^2-4n+6\) if \(n\) is congruent to \(0\) or \(2\) modulo \(6\)
\end{item}
\begin{item}
\(\frac{1}{2}n^2-4n+\frac{19}{2}\) if \(n\) is congruent to \(1\) modulo \(6\)
\end{item}
\begin{item}
\(\frac{1}{2}n^2-4n+\frac{15}{2}\) if \(n\) is congruent to \(3\) or \(5\) modulo \(6\)
\end{item}
\begin{item}
\(\frac{1}{2}n^2-4n+8\) if \(n\) is congruent to \(4\) modulo \(6\)
\end{item}
\end{itemize}
\end{lemma}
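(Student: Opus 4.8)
The plan is to reduce the count of these necklaces to a count of ordered integer triples, and then to evaluate that count by inclusion–exclusion. First I would observe that, since $a,b,c \geq 1$ and $a+b+c = n-1$, the upper bound $a,b,c \leq n-2$ is automatic (indeed each is at most $n-3$), so the only genuine constraints on a triple are that $a,b,c$ be distinct positive integers with $a+b+c = n-1$. Write $T$ for the number of ordered triples $(a,b,c)$ satisfying these.

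Next I would set up the correspondence between triples and necklaces. For the fixed sum, the necklace $[a,b,c,n,c,b,a,n]$ determines $(a,b,c)$ only up to the symmetry coming from the cyclic shift by four positions, which sends $[a,b,c,n,c,b,a,n]$ to $[c,b,a,n,a,b,c,n]$; hence the triples $(a,b,c)$ and $(c,b,a)$ yield the same necklace. These are the only two representatives of the standard shape, since the two copies of $n$ must land in the positions occupied by $n$, which forces the shift to be $0$ or $4$. Because $a \neq c$ by distinctness, this pairing is fixed-point-free, so the map from triples to necklaces of the first form is exactly two-to-one, giving $T/2$ necklaces. The same argument gives $T/2$ necklaces of the second form $[a,b,c,n-1,c,b,a,n-1]$, and the two families are disjoint because the first contains the value $n$ while the second does not. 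By the preceding lemma every such necklace is topdrop-valid, so the total count is exactly $T/2 + T/2 = T$.

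It then remains to evaluate $T$. Set $S = n-1$. The number of ordered triples of positive integers summing to $S$ is the number of compositions of $S$ into three parts, namely $\binom{S-1}{2}$. I would subtract the triples with a repeated value by inclusion–exclusion: the event $a=b$ (and likewise $b=c$ and $a=c$) has $\lfloor (S-1)/2 \rfloor$ solutions, while the three pairwise intersections and the triple intersection all coincide with $\{a=b=c\}$, which has exactly one solution when $3 \mid S$ and none otherwise. Thus the number of non-distinct triples is $3\lfloor (S-1)/2 \rfloor - 2\,[\,3 \mid S\,]$, with $[\,\cdot\,]$ the Iverson bracket, and so
\[ T = \binom{n-2}{2} - 3\left\lfloor \frac{n-2}{2} \right\rfloor + 2\,[\,3 \mid (n-1)\,]. \]
Finally I would expand this into cases by $n \bmod 6$: the parity of $n$ fixes the value of the floor and $n \bmod 3$ fixes the indicator, and the six residues collapse into the four stated formulas. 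This computation in fact yields equality, which a fortiori gives the claimed lower bound.

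The step I expect to be the main obstacle is the necklace-counting reduction rather than the arithmetic. I must argue carefully that the only cyclic-shift symmetry of these necklaces is the order-two reversal $(a,b,c) \mapsto (c,b,a)$, with no further coincidences, that distinctness makes this pairing free so the correspondence is genuinely two-to-one, and that the $n$-family and the $(n-1)$-family do not overlap. By contrast, the inclusion–exclusion count of distinct triples and the reduction modulo $6$ are routine, though they must be bookkept carefully to land on the four residue cases.
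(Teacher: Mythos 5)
Your proposal is correct and follows essentially the same route as the paper: reduce to counting ordered distinct triples summing to $n-1$ (noting the two-to-one correspondence per necklace form cancels against the two forms), count compositions, and subtract the coincidences $a=b$, $a=c$, $b=c$ with a correction for $a=b=c$ when $3\mid n-1$. Your write-up is in fact a bit more careful than the paper's on the two-to-one step and on the $n$-odd count of the $b=c$ case, and your closed form reproduces all four residue formulas exactly.
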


\begin{proof}
It is sufficient to count the number of ordered triples \((a,b,c)\) with \(1 \leq a,b,c \leq n-2\), \(a+b+c = n-1\), and \(a\), \(b\), and \(c\) distinct. This quantity is twice the number of either of the two necklace forms, since a necklace which contains "\(a\) \(b\) \(c\)" also contains "\(c\) \(b\) \(a\)," but since we count both the necklaces with \(n\) and also the necklaces with \(n-1\), it is the correct count after all.

Fix \(a\). Since we want \(a+b+c=n-1\), and \(c\) must be at least \(1\), \(b\) must be less than or equal to \((n-1)-a-1\). This yields \(\sum_{a=1}^{n-2}{\sum_{b=1}^{n-a-2}{1}} = \frac{1}{2}n^2-\frac{5}{2}n+3\) possible choices of \(a\) and \(b\). Every choice of \(a\) and \(b\) induces a positive value of \(c\) so that \(a+b+c=n-1\). We need to determine how many of these pairs induce a value of \(c\) so that \(a\), \(b\), and \(c\) are all different values. We will count how many choices of \(a\), \(b\) and the induced value \(c\) have \(a=b\), \(a=c\), or \(b=c\) and subtract accordingly. These conditions are mutually exclusive unless \(a=b=c\), which we will take into account at the end.

First, we consider the possibility that \(a=b\). When \(a \leq \frac{(n-1)-1}{2}\), there is one choice of \(b\) where \(a=b\). When \(a > \frac{(n-1)-1}{2}\), there are no choices of \(b\) where \(a=b\), or else \(c\) would not be positive. This means there are \(\frac{n-2}{2}\) forbidden choices when \(n\) is even, and \(\frac{n-3}{2}\) forbidden choices when \(n\) is even.

By symmetry, on account of \(a=c\) there are \(\frac{n-2}{2}\) forbidden choices when \(n\) is even, and \(\frac{n-3}{2}\) forbidden choices when \(n\) is odd.

We consider the possibility that \(b=c\). This can only happen when \((n-1)-a\) is even, in which case there is a single value of \(b\) that induces a \(c\) so that \(b=c\). There are \(\frac{n-2}{2}\) such values of \(a\) which occur when \(n\) is even, and \(\frac{n-2}{2}\) values when \(n\) is odd.

The conditions \(a=b\), \(a=c\), and \(b=c\) are mutually exclusive unless \(a=b=c\). There is one triple \((a,b,c)\) with \(a=b=c\) and \(a+b+c=n-1\) if and only if \(n-1\) is a multiple of \(3\). When \(n-1\) is not a multiple of \(3\), we simply subtract the number of forbidden choices from each of the three conditions above, which after omitted simplifications yields the result. When \(n-1\) is a multiple of \(3\), we must subtract off two fewer choices so that we do not under count by subtracting the special triple three times. This is why we must separate the cases where \(n\) is congruent to \(1\) or \(4\) modulo \(6\) instead of simply having an even case and an odd case.
\end{proof}

\begin{lemma}\label{lem:eight_necklaces_different_of_n_n_minus_1}
The number of topdrop-valid necklaces of the form \([a,b,c,n-1,c,b,a,n]\) with \(1 \leq a,b,c \leq n-2\), \(a+b+c \neq n-1\), \(a+b\neq n-1\), \(b+c\neq n-1\), and \(a\), \(b\), and \(c\) distinct is at least:
\begin{itemize}
\begin{item}
\(n^3 - \frac{23}{2}n^2 + 42 n - 46\) if \(n\) is congruent to \(0\) or \(2\) modulo \(6\)
\end{item}
\begin{item}
\(n^3 - \frac{23}{2}n^2 + 44 n - \frac{115}{2}\) if \(n\) is congruent to \(1\) modulo \(6\)
\end{item}
\begin{item}
\(n^3 - \frac{23}{2}n^2 + 44 n - \frac{111}{2}\) if \(n\) is congruent to \(3\) or \(5\) modulo \(6\)
\end{item}
\begin{item}
\(n^3 - \frac{23}{2}n^2 + 42 n - 48\) is congruent to \(4\) modulo \(6\)
\end{item}
\end{itemize}
\end{lemma}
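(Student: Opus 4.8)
The plan is to reduce the lemma to a counting problem about ordered triples and then evaluate it by inclusion--exclusion, recycling the triple count from Lemma \ref{lem:eight_necklaces_same_of_n_n_minus_1}. The preceding lemma already shows that every necklace of the form $[a,b,c,n-1,c,b,a,n]$ with $a,b,c$ distinct, $1 \le a,b,c \le n-2$, and $a+b$, $b+c$, $a+b+c$ all different from $n-1$ is topdrop-valid, so the number of valid necklaces of this form is at least the number of such triples. The crucial structural point is that here the map sending an ordered triple $(a,b,c)$ to this necklace is injective: because $n$ and $n-1$ occupy distinguishable positions (unlike the reflection-symmetric necklaces $[a,b,c,n,c,b,a,n]$ of the previous lemma, where $(a,b,c)$ and $(c,b,a)$ produce the same necklace), one recovers $(a,b,c)$ uniquely from a necklace by locating its single entry $n-1$ and reading off the next three entries, which are all $\le n-2$ and hence unambiguous. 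Thus it suffices to count ordered triples, and the ``at least'' reflects that these sum conditions are sufficient for validity but need not be necessary.

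To count, I would start from the $(n-2)(n-3)(n-4)$ ordered triples of distinct values in $\{1,\dots,n-2\}$ and subtract those violating at least one of the three forbidden-sum conditions. Write $A$, $B$, $C$ for the sets of distinct triples with $a+b=n-1$, with $b+c=n-1$, and with $a+b+c=n-1$, respectively. The key simplification is that all pairwise intersections are empty: $a+b=n-1$ together with $b+c=n-1$ forces $a=c$, contradicting distinctness, while each of $A\cap C$ and $B\cap C$ forces a coordinate to equal $0$, which is impossible. Hence inclusion--exclusion collapses to $|A|+|B|+|C|$, and the answer is $(n-2)(n-3)(n-4) - |A| - |B| - |C|$.

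It then remains to evaluate the three terms. By the symmetry $(a,b,c)\mapsto(c,b,a)$ one has $|B|=|A|$, and $|A|$ is computed by counting pairs with $a+b=n-1$ and $a\ne b$ — there are $n-2$ such pairs when $n$ is even and $n-3$ when $n$ is odd, since $a=b$ arises only for odd $n$ — each extended by one of the $n-4$ remaining values for $c$. For $|C|$, the number of distinct ordered triples summing to $n-1$ is exactly the quantity established in Lemma \ref{lem:eight_necklaces_same_of_n_n_minus_1}, whose stated value already counts both symmetric forms together and thereby equals the ordered-triple total; this term is the sole source of the four residue classes modulo $6$. Substituting $|A|$, $|B|$, and $|C|$ and simplifying in each case should yield the four stated polynomials.

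The main obstacle is bookkeeping rather than any deep idea: one must (i) justify the triple-to-necklace bijection carefully enough to be sure the symmetric collapse seen in the earlier lemma does not occur here, (ii) verify that the pairwise intersections genuinely vanish so the inclusion--exclusion has no cross terms, and (iii) import $|C|$ from Lemma \ref{lem:eight_necklaces_same_of_n_n_minus_1} with the correct normalization — it is easy to be off by a factor of two, since that lemma's formula bundles two necklace forms at once. The final algebraic simplification across the residue classes $0,2$; $1$; $3,5$; and $4$ modulo $6$ is routine but error-prone, so I would verify each polynomial against a small explicit value of $n$ (for instance $n=8$, where a direct enumeration is feasible) before trusting it.
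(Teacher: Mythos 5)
Your proposal follows essentially the same route as the paper: reduce to counting ordered distinct triples, observe that the three forbidden-sum conditions are mutually exclusive so inclusion--exclusion has no cross terms, compute $|A|=|B|=(n-2)(n-4)$ or $(n-3)(n-4)$ according to parity, and import the $a+b+c=n-1$ count from Lemma \ref{lem:eight_necklaces_same_of_n_n_minus_1} with the normalization you describe. Your explicit justification of the triple-to-necklace injectivity and of the vanishing pairwise intersections is slightly more careful than the paper, which merely asserts mutual exclusivity, but the argument is the same.
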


\begin{proof}
It is sufficient to count the number of ordered triples \((a,b,c)\) so that \(a+b+c \neq n-1\), \(a+b\neq n-1\), \(b+c \neq n-1\) and all three values are different. Since there are \((n-2)(n-3)(n-4) = n^3-9n^2+26n-24\) ways to choose \(3\) different values from \(n-2\) possible values, we compute our desired quantity by  subtracting the number of total triples with conditions \(a+b+c=n-1\), \(a+b = n-1\), and \(b+c = n-1\), and these conditions are mutually exclusive.

The number of triples with \(a+b+c=n-1\) is what was calculated in the previous lemma.

The number of triples with \(a+b=n-1\) is \((n-2)(n-4)\) if \(n\) is even, or \((n-3)(n-4)\) if \(n\) is odd. This is because for each of the \(n-2\) possible values of \(a\), there is one possible value of \(b\) for which \(a+b=n-1\), unless we would have \(a=b\), which happens for one value of \(a\) if \(n\) is odd. For each of the \(n-2\) or \(n-3\) choices of \(a\) and the corresponding \(b\), there are \(n-4\) choices for \(c\). We count the number of triples with \(b+c=n-1\) the same way.

So the number of necklaces of this form is \(n^3-9n^2+26n-24\) minus the quantity from the previous lemma, minus either \(2(n-2)(n-4)\) or \(2(n-3)(n-4)\) depending on whether \(n\) is even or odd. Omitting simplifications, this yields the result.
\end{proof}

\begin{proof}[Proof of part (6)]
By Theorem \ref{thm:orbit_counting}, the number of orbits of size eight is at least the number of necklaces counted in Lemma \ref{lem:eight_necklaces_same_of_n_n_minus_1} multiplied by \((n-4)!\), since there are four different values in each necklace and the fundamental period is one, plus the number of necklaces counted in Lemma \ref{lem:eight_necklaces_different_of_n_n_minus_1} multiplied by \((n-5)!\), since there are five different values in each necklace and the fundamental period is one. Simplifying these quantities yields the result.
\end{proof}

\section{Permutation Parity and the Topdrop-necklace}\label{sec:Permutation Parity and the Topdrop-necklace}
The topdrop map can be viewed as a piecewise map which permutes the elements of the input permutation according to its first element. For example, since \(213456\) starts with \(2\), applying the topdrop map to \(213456\) moves the first element to the last (sixth) position, the second element to the second-to-last (fifth) position, and every other element moves two positions towards the front, so that \(T(213456) = 345612\). This movement can be represented by the permutation \(\begin{pmatrix}
1 & 2 & 3 & 4 & 5 & 6 \\
6 & 5 & 1 & 2 & 3 & 4
\end{pmatrix}
\). Applying the topdrop map to \(345612\) moves the elements according to the permutation \(\begin{pmatrix}
1 & 2 & 3 & 4 & 5 & 6 \\
6 & 5 & 4 & 1 & 2 & 3
\end{pmatrix}\), and \(T(345612) = 612543\). This means that \(T^2\) moves the elements of \(213456\) according to the permutation \[\begin{pmatrix}
1 & 2 & 3 & 4 & 5 & 6 \\
6 & 5 & 4 & 1 & 2 & 3
\end{pmatrix}\begin{pmatrix}
1 & 2 & 3 & 4 & 5 & 6 \\
6 & 5 & 1 & 2 & 3 & 4
\end{pmatrix} = \begin{pmatrix}
1 & 2 & 3 & 4 & 5 & 6 \\
3 & 2 & 6 & 5 & 4 & 1
\end{pmatrix}.\] If we were to continue in this manner, we would eventually find that \[T^6(213456) = 213456,\] and the product of the six permutations which describe the movement of the elements at each iteration must equal the identity. We now generalize and formalize these observations.

We define notation to refer to the permutations which describe the movements we have discussed. The following definition gives the permutation which describes the movement when \(p\) is the first element of a permutation in \(S_n\).

\begin{definition}
Given natural numbers \(n\) and \(p\) with \(p\leq n\), define \[\sigma_{n,p} = \begin{pmatrix}
    1 & 2 & \ldots & p & p+1 & p+2 & \ldots & n \\
    n & n-1 & \ldots & n+1-p & 1 & 2 & \ldots & n-p 
\end{pmatrix}\]
\end{definition}

\begin{theorem}\label{thm:parity}
Let \([x_1, x_2, \ldots, x_k]\) be a topdrop-valid necklace in \(S_n\). If \(n\) is even, then evenly many \(x_i\)'s are congruent to \(1\) or \(2\) modulo \(4\). If \(n\) is odd, then evenly many \(x_i\)'s are congruent to \(2\) or \(3\) modulo \(4\).
\end{theorem}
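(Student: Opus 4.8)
The plan is to exploit the position-permutation viewpoint set up just before the statement: a single application of $T$ to a permutation whose first entry is $p$ rearranges the $n$ positions according to $\sigma_{n,p}$. Since the sign of a permutation is a homomorphism onto $\{\pm 1\}$, the crux is that the signs of the permutations $\sigma_{n,x_i}$ encountered along an orbit must multiply to $1$, and then to identify which first-entries $p$ contribute a sign of $-1$.

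First I would make the bookkeeping precise. Fix a permutation $\pi$ realizing the necklace $[x_1,\ldots,x_k]$, so $k$ is the orbit size and $T^k(\pi)=\pi$. Writing $\pi^{(j)} = T^j(\pi)$, the step $\pi^{(j)} \mapsto \pi^{(j+1)}$ permutes the positions by $\sigma_{n,x_{j+1}}$, where $x_{j+1} = \pi^{(j)}_1$. Composing the $k$ steps and using $\pi^{(k)} = \pi^{(0)}$ shows that the product of the $\sigma_{n,x_i}$ (in some order, possibly with inverses — neither the order nor the inverses matters once we pass to signs) is the identity. Applying $\mathrm{sgn}$ gives
\[
\prod_{i=1}^{k} \mathrm{sgn}(\sigma_{n,x_i}) = 1,
\]
so the number of indices $i$ with $\mathrm{sgn}(\sigma_{n,x_i}) = -1$ is even.

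Next I would compute $\mathrm{sgn}(\sigma_{n,p})$ by counting inversions. In one-line notation $\sigma_{n,p}$ reads $n, n-1, \ldots, n-p+1, 1, 2, \ldots, n-p$: a decreasing block of length $p$ followed by an increasing block of length $n-p$, with every entry of the first block exceeding every entry of the second. This yields $\binom{p}{2} + p(n-p)$ inversions, so
\[
\mathrm{sgn}(\sigma_{n,p}) = (-1)^{\binom{p}{2} + p(n-p)}.
\]

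Finally I would reduce the exponent modulo $2$. Using $p(n-p) \equiv p(n-1) \pmod 2$ together with the fact that $\binom{p}{2}$ is odd exactly when $p \equiv 2, 3 \pmod 4$, a short case analysis on $p \bmod 4$ shows that the exponent is odd exactly when $p \equiv 2,3 \pmod 4$ if $n$ is odd, and exactly when $p \equiv 1,2 \pmod 4$ if $n$ is even. Combining this with the parity conclusion of the second step gives the theorem. I do not anticipate a genuine obstacle; the one place demanding care is the bookkeeping of the first step — verifying that one full trip around the orbit composes the $\sigma_{n,x_i}$ to the identity — and checking the inversion count against the worked $S_6$ example (e.g. $\sigma_{6,2}$ has $9$ inversions, $\sigma_{6,3}$ has $12$) to be sure the resulting residue classes match the stated ones rather than their complements.
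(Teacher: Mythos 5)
Your proposal is correct and follows essentially the same route as the paper: establish that one full trip around the orbit composes the position-permutations $\sigma_{n,x_i}$ to the identity, then determine $\mathrm{sgn}(\sigma_{n,p})$ by an inversion count (your $\binom{p}{2}+p(n-p)$ equals the paper's $pn-\tfrac{p(p+1)}{2}$) and read off the residue classes mod $4$. The only cosmetic difference is that the paper pins down the composition-to-identity step with an explicit induction lemma, whereas you note it as the one bookkeeping point needing verification.
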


\begin{lemma}
Let \(\pi\) be a permutation in \(S_n\) and let \(i,k\) be natural numbers with \(1 \leq i \leq n\) and \(k \geq 1\). Then \(\pi_i\) is the element in position \[[\sigma_{n,T^{k-1}(\pi)_1}\sigma_{n,T^{k-2}(\pi)_1}\cdots\sigma_{n,T^1(\pi)}\sigma_{n,\pi_1}](i)\] of \(T^{k}(\pi)_i\).
\end{lemma}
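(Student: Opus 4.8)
The plan is to prove this by induction on $k$, exploiting the fact that a single application of $T$ permutes positions exactly according to one of the maps $\sigma_{n,p}$, where $p$ is the first element of the permutation being transformed.

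First I would establish the base case $k = 1$, where the claim reduces to checking that $\sigma_{n,\pi_1}$ records precisely how $T$ relocates positions. Writing $p = \pi_1$ and reading off the definition $T(\pi) = \seg{\pi_1+1}{>}{n}{n-\pi_1}\seg{\pi_1}{<}{1}{\pi_1}$, the reversed block $\pi_p\pi_{p-1}\cdots\pi_1$ is placed in positions $n-p+1,\ldots,n$, so $\pi_i$ lands in position $n+1-i$ when $1 \le i \le p$; meanwhile the forward block $\pi_{p+1}\cdots\pi_n$ occupies positions $1,\ldots,n-p$, so $\pi_i$ lands in position $i-p$ when $p+1 \le i \le n$. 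Comparing these two cases term by term against the two-line description of $\sigma_{n,p}$ shows that $\sigma_{n,\pi_1}(i)$ is exactly the new position of $\pi_i$, which is the base case.

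For the inductive step, suppose the element $\pi_i$ occupies position $P_k(i) := [\sigma_{n,T^{k-1}(\pi)_1}\cdots\sigma_{n,\pi_1}](i)$ in $T^k(\pi)$. Applying $T$ once more to the permutation $T^k(\pi)$, whose first element is $T^k(\pi)_1$, sends the element in each position $j$ to position $\sigma_{n,T^k(\pi)_1}(j)$ by the base case applied to $T^k(\pi)$. In particular $\pi_i$, which sits in position $P_k(i)$, moves to position $\sigma_{n,T^k(\pi)_1}(P_k(i)) = [\sigma_{n,T^k(\pi)_1}\sigma_{n,T^{k-1}(\pi)_1}\cdots\sigma_{n,\pi_1}](i) = P_{k+1}(i)$, closing the induction.

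The hard part is not any calculation but keeping the composition convention straight: because each new application of $T$ prepends its factor $\sigma_{n,T^{j}(\pi)_1}$ on the left while acting on the position already produced by the earlier factors, the product must be read right-to-left, with $\sigma_{n,\pi_1}$ (the first application) innermost and $\sigma_{n,T^{k-1}(\pi)_1}$ (the last application) outermost. Once this bookkeeping is fixed, the only genuine content lies in the base-case identification of $\sigma_{n,p}$ with the positional action of $T$, and the inductive step is purely mechanical.
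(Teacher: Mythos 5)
Your proposal is correct and follows essentially the same route as the paper: both argue by induction on \(k\), verifying in the base case that a single application of \(T\) sends the element in position \(i\) to position \(\sigma_{n,p}(i)\) where \(p\) is the current first element, and then composing that observation with the inductive hypothesis by left-multiplying the new factor. Your explicit remark about the right-to-left composition convention matches the order the paper uses, so there is nothing to correct.
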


\begin{proof}
We prove by induction. When \(k = 1\), then it follows from the definition of the topdrop map that \(\pi_i\) is the element in position \(n+1-i\) of \(T^1(\pi)\) if \(1 \leq i \leq \pi_1\) or the element in position \(i-\pi_1\) of \(T^1(\pi)\) if \(\pi_1 + 1 \leq i \leq n\). By definition of \(\sigma_{n,\pi_1}\), we see that \(\pi_i\) is the element in position \(\sigma_{n,\pi_1}(i)\) of \(T^1(\pi)\).

Suppose that the statement is true for \(k=m\). By the definition of the topdrop map, \(T^{m}(\pi)_i\) is the element in position \(n+1-i\) of \(T^{m+1}(\pi)\) when \(1 \leq i \leq T^{m}(\pi)_1\) or in position \(i-T^{m}(\pi)_1\) of \(T^{m+1}(\pi)\) if \(T^{m}(\pi)_1 + 1 \leq i \leq n\). By definition of \(\sigma_{n,T^{m}(\pi)_1}\), we see that \(T^{m}(\pi)_i\) is the element in position \(\sigma_{n,T^{m}(\pi)_1}(i)\) of \(T^{m+1}(\pi)\). Combined with the inductive hypothesis, which is that \(\pi_i\) is the element in position \[[\sigma_{n,T^{m-1}(\pi)_1}\sigma_{n,T^{m-2}(\pi)_1}\cdots\sigma_{n,T^1(\pi)}\sigma_{n,\pi_1}](i)\] of \(T^{m}(\pi)_i\), we deduce that \(\pi_i\) is the element in position 
\begin{align*}
&\sigma_{n,T^{m}(\pi)_1}[\sigma_{n,T^{m-1}(\pi)_1}\sigma_{n,T^{m-2}(\pi)_1}\cdots\sigma_{n,T^1(\pi)}\sigma_{n,\pi_1}](i) \\
=& [\sigma_{n,T^{m}(\pi)_1}\sigma_{n,T^{m-1}(\pi)_1}\cdots\sigma_{n,T^1(\pi)}\sigma_{n,\pi_1}](i)
\end{align*} of \(T^{m+1}(\pi)_i\).
\end{proof}

\begin{lemma}\label{lem:idprod}
If \([x_0, x_1, \ldots, x_{m-1}]\) is a topdrop-valid necklace in \(S_n\), then \[\sigma_{n,x_{m-1}}\sigma_{n,x_{m-2}}\cdots\sigma_{n,x_0} = e,\] where \(e\) is the identity permutation.
\end{lemma}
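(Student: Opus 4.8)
The plan is to leverage the preceding element-tracking lemma, which records exactly where each entry is sent after $k$ iterations of $T$ via a product of the movement permutations $\sigma_{n,\cdot}$. Since the necklace $[x_0, x_1, \ldots, x_{m-1}]$ is assumed topdrop-valid, I would first fix a permutation $\pi \in S_n$ realizing it as its topdrop-necklace. By the definition of the topdrop-necklace, the orbit of $\pi$ then has size $m$, so $T^m(\pi) = \pi$, and moreover $x_j = T^j(\pi)_1$ for each $j$ with $0 \le j \le m-1$ (in particular $x_0 = \pi_1$).

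Next I would apply the preceding lemma with $k = m$. It tells us that for every index $i$ with $1 \le i \le n$, the value $\pi_i$ occupies position $[\sigma_{n,x_{m-1}}\sigma_{n,x_{m-2}}\cdots\sigma_{n,x_0}](i)$ in $T^m(\pi)$. But $T^m(\pi) = \pi$, so in fact $\pi_i$ occupies position $[\sigma_{n,x_{m-1}}\cdots\sigma_{n,x_0}](i)$ in $\pi$ itself.

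The crux is then to invoke that $\pi$ is a genuine permutation, hence a bijection on $\{1,\ldots,n\}$: its entries are distinct, so each value $\pi_i$ appears in exactly one position of $\pi$, namely position $i$. Comparing this with the previous paragraph forces $[\sigma_{n,x_{m-1}}\cdots\sigma_{n,x_0}](i) = i$ for every $i$. Since this holds for all $i$, the product $\sigma_{n,x_{m-1}}\sigma_{n,x_{m-2}}\cdots\sigma_{n,x_0}$ fixes every point and is therefore the identity permutation $e$, as claimed.

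I do not expect a serious obstacle: the heavy lifting is carried out by the preceding tracking lemma, and the one conceptual point is recognizing that the injectivity of $\pi$ is precisely what converts ``$\pi_i$ lands back in its own slot'' into ``the composite position map is the identity.'' The only thing to verify carefully is the indexing alignment — that $x_j = T^j(\pi)_1$ matches the order of the factors $\sigma_{n,T^{k-1}(\pi)_1}\cdots\sigma_{n,\pi_1}$ appearing in that lemma — so that the product indeed reads $\sigma_{n,x_{m-1}}\cdots\sigma_{n,x_0}$ and not its reverse.
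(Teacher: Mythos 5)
Your proposal is correct and follows essentially the same route as the paper: realize the necklace by a permutation \(\pi\), apply the element-tracking lemma with \(k=m\), use \(T^m(\pi)=\pi\), and conclude the product of the \(\sigma_{n,x_j}\) fixes every index. Your explicit appeal to the distinctness of the entries of \(\pi\) just spells out a step the paper leaves implicit.
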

\begin{proof}
If \([x_0, x_1, \ldots, x_{m-1}]\) is topdrop-valid in \(S_n\), then there is a \(\pi\) in \(S_n\) so that \(T^{k}(\pi)_1 = x_k\) when \(0 \leq k \leq m-1\). So \[[\sigma_{n,T^{k-1}(\pi)_1}\sigma_{n,T^{k-2}(\pi)_1}\cdots\sigma_{n,T^{0}(\pi)_1}]  = [\sigma_{n,x_{m-1}}\sigma_{n,x_{m-2}}\cdots\sigma_{n,x_0}].\] Since the size of the necklace is \(m\), \(T^m(\pi) = \pi\). So by the previous lemma, \(\pi_i\) is the element in position \[[\sigma_{n,x_{m-1}}\sigma_{n,x_{m-2}}\cdots\sigma_{n,x_0}](i)\] of \(\pi\), which means that \[[\sigma_{n,x_{m-1}}\sigma_{n,x_{m-2}}\cdots\sigma_{n,x_0}](i) = i\] for all \(i\), implying \[\sigma_{n,x_{m-1}}\sigma_{n,x_{m-2}}\cdots\sigma_{n,x_0} = e.\]
\end{proof}

\begin{remark}
The converse is not true. For example, \(\sigma_{2,1}\sigma_{2,1}  = e\), but \([1, 1]\) is not a topdrop-valid necklace in \(S_2\).
\end{remark}

\begin{proof}[Proof of theorem]
For natural numbers \(n\) and \(k\) with \(k \leq n\), we count the number of inversions in \(\sigma_{n,k}\), which are pairs \(i, j \leq n\) for which \(i < j\) but \(\sigma_{n,k}(i) > \sigma_{n,k}(j)\). We find that there are \(n-i\) inversions for each \(1 \leq i \leq k\), since \[\sigma_{n,k}(i) > \sigma_{n,k}(i) - (j-i) = \sigma_{n,k}(j)\] when \(i < j \leq n\) and \(j \leq k\), and \[\sigma_{n,k}(i) \geq n+1-k > n -k \geq \sigma_{n,k}(j)\] when \(i < j \leq n\) and \(j > k\), so that there is an inversion for every  pair \(i,j\) with \(1 \leq i \leq k\) and \(i < j \leq n\). We find that there are no inversions for \(i\) where \(k < i \leq n\), since \[\sigma_{n,k}(i) < \sigma_{n,k}(i) + (j - i) = \sigma_{n,k}(j)\] whenever \(i < j \leq n\).

So the total inversion count for \(\sigma_{n,k}\) is \[\sum_{i=1}^{k}(n-i) = kn - \frac{k(k+1)}{2}.\] It is easily verified that the inversion count is even when \(n\) is even and \(k\) is congruent to \(0\) or \(3\) modulo \(4\) or when\(n\) is odd and \(k\) is congruent to \(0\) or \(1\) modulo \(4\). The expression is odd when \(n\) is even and \(k\) is congruent to  \(1\) or \(2\) modulo \(4\) or when \(n\) is odd and \(k\) is congruent to \(2\) or \(3\) modulo \(4\).

Since the identity permutation is even, it is necessary that the number of odd permutations in the product of Lemma \ref{lem:idprod} must be even, from which the result follows.
\end{proof}

\begin{example}
The necklace \[[2,3,11,4,10,2,5,8,2,6,7]\text{ in } S_{14}\] is valid. Since \(14\) is even, we expect the total number of values congruent to \(1\) or \(2\) mod \(4\) to be even, i.e. \(1\)'s, \(2\)'s, \(5\)'s, \(6\)'s, \(9\)'s, \(10\)'s, \(13\)'s, and \(14\)'s. Summing over these numbers in order, we see that \[0+3+1+1+0+1+0+0=6\text{ is even.}\]
We can rule out the validity of the necklace \[[4,3,11,4,10,2,5,8,2,6,7]\text{ in } S_{14},\] because \[0+2+1+1+0+1+0+0=5\text{ is odd.}\]
\end{example}

\section{Future Directions}\label{sec:Future Directions}
The approach used to prove the orbit count results in Theorem \ref{thm:orbit_counts} could be extended to longer orbits. Within the existing data there are multiple necklace types that appear within reach. It should also be feasible to write a computer program which automates most of the casework, although this would not be a trivial task. At each iteration of the topdrop map, there are multiple cases depending on which segments or parts of segments of the permutation are reversed and moved. This produces a tree structure where each node is associated with a system of equalities/inequalities involving the values in the topdrop-necklace at that point. After enforcing constraints in each system so that \(T^m(\pi) = \pi\) for some chosen orbit size \(m\), the valid necklaces should correspond to the systems for which there is a solution.

Data for orbits and necklaces through \(S_14\) was generated using Python and SageMath. Theorem \ref{thm:orbits_with_necklace} and \ref{thm:orbit_counts} were used to vastly reduce the computational intensity: The code iterates through all permutations of a given length and stores the necklaces of the permutations, but it only stores a necklace if the necklace has not been seen before, as opposed to storing all orbits.

The graphs of the distributions of the number of orbits and necklaces of each orbit size look nice. Hopefully it is possible to say something about the distributions as a whole, although it is not yet known if they follow a known distribution. The first two graphs below plot the number of orbits\textemdash first the raw counts and then a log plot. The latter two plot the number of necklaces, where the second graph is also a log plot.

\begin{tikzpicture}
\begin{axis}[
    width=\textwidth,
    height=0.5\textwidth,
    xlabel={Orbit Size},
    ylabel={Number of Orbits},
    title={Orbit Size vs Number of Orbits in \(S_{14}\)},
    xmin=0,
    enlarge x limits={abs=1},
    tick label style={font=\tiny},
    label style={font=\small},
    title style={font=\small},
]
\addplot[
    only marks,
    mark=*,
    mark size=1pt,
]
table [col sep=comma, x=Orbit Size, y=Number of Orbits] {orbit_data.csv};
\end{axis}
\end{tikzpicture}

\begin{tikzpicture}
\begin{axis}[
    width=\textwidth,
    height=0.5\textwidth,
    xlabel={Orbit Size},
    ylabel={Number of Orbits},
    title={Number of Orbits vs Orbit Size in \(S_{14}\) (log plot)},
    ymode=log,
    log basis y=10,
    tick label style={font=\tiny},
    label style={font=\small},
    title style={font=\small},
    xmin=0
]
\addplot[
    only marks,
    mark=*,
    mark size=1pt,
]
table [col sep=comma, x=Orbit Size, y=Number of Orbits] {orbit_data.csv};
\end{axis}
\end{tikzpicture}

\begin{tikzpicture}
\begin{axis}[
    width=\textwidth,
    height=0.5\textwidth,
    xlabel={Orbit Size},
    ylabel={Number of Necklaces},
    title={Orbit Size vs Number of Necklaces in \(S_{14}\)},
    xmin=0,
    enlarge x limits={abs=1},
    tick label style={font=\tiny},
    label style={font=\small},
    title style={font=\small},
]
\addplot[
    only marks,
    mark=*,
    mark size=1pt,
]
table [col sep=comma, x=Orbit Size, y=Number of Necklaces] {necklace_data.csv};
\end{axis}
\end{tikzpicture}

\begin{tikzpicture}
\begin{axis}[
    width=\textwidth,
    height=0.5\textwidth,
    xlabel={Orbit Size},
    ylabel={Number of Necklaces},
    title={Number of Necklaces vs Orbit Size in \(S_{14}\) (log plot)},
    ymode=log,
    log basis y=10,
    tick label style={font=\tiny},
    label style={font=\small},
    title style={font=\small},
    xmin=0
]
\addplot[
    only marks,
    mark=*,
    mark size=1pt,
]
table [col sep=comma, x=Orbit Size, y=Number of Necklaces] {necklace_data.csv};
\end{axis}
\end{tikzpicture}

As the topdrop map arises as a variant of the topswops problem, so there are more variants which need study. Gardner \cite{Gardner} lists a few additional variants, and in a variant of topswops involving two decks he makes an observation about loops of the top element of each deck that is reminiscent of the necklaces we have used here.

\section*{Acknowledgements}
The author thanks Jessica Striker, Daniel Biebighauser, and Victor Reiner for many helpful conversations.

\appendix
\section{History of Topswops}\label{sec:History of Topswops}
The topdrop map arises from a modification of the problem known as topswops. In 1976, David Berman and Murray S. Klamkin \cite{BermanKlamkinKnuth1976} described the following "reverse card shuffle:"

\begin{quote}
A deck of \(n\) cards is numbered 1 to \(n\) in random order. Perform the following operations on the deck. Whatever the number of the top card is, count down that
many in the deck and turn the whole block over on top of the remaining cards.
Then, whatever the number of the (new) top card, count down that many cards in
the deck and turn this whole block over on top of the remaining cards. Repeat-the
process.
\end{quote}

According to Donald Knuth in an editorial note \cite{BermanKlamkinKnuth1977}, this operation was in fact first proposed by John Conway with the name \textit{topswaps}  (more commonly spelled  \textit{topswops}).

Berman and Klamkin assert without showing a proof that eventually the card with value 1 will always reach the top of the deck, at which point further iterations do not change the order of the deck, and give without proof formulas for the number of starting permutations which require one, two, or three iterations for a 1 to arrive on top. They pose the problem of finding the maximum number of iterations before a 1 reaches the top, among other questions. They provide computational results for small deck sizes, and they reproduce a proof of a result from Knuth showing a Fibonacci upper bound on the maximum number of iterations.

An exact formula for the maximum number of iterations has not been found.  Linda Morales and Hal Sudborough \cite{MoralesSudborough} showed a quadratic lower bound in 2010. In 2021, Kimura et al. \cite{KimuraEtAl} computed maximum iterations through \(n=19\) by applying an algorithm developed by Knuth, and these remain the best exact calculations.

Topswops has been a subject of a few other research ventures. Yuichi Komano and Takaaki Mizuki \cite{KomanoMizuki} took research into topswops in a different direction in their 2024 paper demonstrating a zero-knowledge proof protocol to show that one knows a starting deck which requires a specified number of iterations until a 1 is on top. Also in 2024, Jonathan Parlett \cite{Parlett} introduced the term \textit{fixed point homing shuffles} to describe permutation maps which place the first element to its numerical position and which do not move higher positions, and topswops is an example.

Martin Gardner included topswops in his book ``Time Travel and other Mathematical Bewilderments'' published in 1988 \cite{Gardner}. Gardner recounts a proof from Herbert Wilf that topswops terminates with a 1 on top of the deck, and he also describes several variants, which are also attributed to Conway. One of these variants is topdrops, from which we arrive at the topdrop map.

\bibliographystyle{amsalpha}
\bibliography{main}

\end{document}